\numberwithin{equation}{section}
\numberwithin{theorem}{section}
\newcommand{\ddiv}{\operatorname{div}}
\newcommand{\rot}{\operatorname{rot}}
\newcommand{\Curl}{\operatorname{Curl}}
\title{Computational lower bounds \\of the Maxwell eigenvalues}
\author{D. Gallistl\thanks{Friedrich-Schiller-Universit\"at Jena,
                    Institut für Mathematik,
                    Ernst-Abbe-Platz 2, 07743 Jena, Germany
                    (\email{dietmar.gallistl@uni-jena.de}).}
\and  V. Olkhovskiy\footnotemark[2]
}
\begin{document}

\maketitle

% REQUIRED
\begin{abstract}
A method to compute guaranteed lower
bounds to the eigenvalues of the Maxwell system 
in two or three space dimensions is proposed
as a generalization of the method of
Liu and Oishi [SIAM J. Numer. Anal., 51, 2013]
for the Laplace operator.
The main tool is the computation of an explicit
upper bound to the error of the Galerkin projection.
The error is split in two parts: one part is controlled
by a hypercircle principle and an auxiliary eigenvalue
problem. The second part requires a perturbation 
argument for the right-hand side replaced by
a suitable piecewise polynomial. The latter error
is controlled through the use of the commuting 
quasi-interpolation by Falk--Winther and
computational bounds on its stability constant.
This situation is different from the Laplace operator
where such a perturbation is easily controlled through
local Poincaré inequalities.
The practical viability of the approach is demonstrated
in test cases for two and three space dimensions.
\end{abstract}

% REQUIRED
\begin{keywords}
  Maxwell, eigenvalues, lower bounds, quasi-interpolation,
stability constants
\end{keywords}

% REQUIRED
\begin{AMS}
 35Q61, %Maxwell equations
65N30, %Finite element, Rayleigh-Ritz and Galerkin methods for boundary value problems involving PDEs
65N12, %Stability and convergence of numerical methods for boundary value problems involving PDEs
78M10  %Finite element, Galerkin and related methods applied to problems in optics and electromagnetic theory 
\end{AMS}

\section{Introduction}

This paper is devoted to the computation of guaranteed lower
bounds of the Maxwell eigenvalues.
The Maxwell eigenvalue problem over a suitable bounded domain
$\Omega$ in dimension $d=2$ or $d=3$
seeks eigenpairs $(\lambda , u)$ with nontrivial $u$
such that
\begin{equation}\label{e:EVP_intro}
 (-1)^{d-1} \Curl\rot u = \lambda u \text{ in }\Omega
 \quad\text{and}\quad
 u \wedge \nu = 0 \text{ on }\partial\Omega.
\end{equation}
Here, $\nu$ is the outer unit normal to $\partial\Omega$
and $u\wedge \nu$ is the tangential trace of $u$.
The usual rotation (or curl) operator is denoted by
$\rot$, while its formal adjoint is denoted by $\Curl$;
precise definitions are given below.
The $\rot$ operator has an infinite-dimensional kernel
containing all admissible gradient fields,
leading to an eigenvalue $\lambda=0$ of infinite multiplicity.
Sorting out this eigenvalue in numerical computations
requires the incorporation of a divergence-free constraint.
In the setting of $\rot$-conforming finite elements,
the divergence constraint is necessarily imposed in 
a discrete weak form because simultaneous $\rot$ and
$\ddiv$-conformity may lead to non-dense and thus wrong
approximations \cite{Costabel1991,Monk}.
In conclusion, a variational form of \eqref{e:EVP_intro}
in an energy space $V$ is in general
approximated with a nonconforming
discrete space $V_h\not\subseteq V$
and no monotonicity principles are applicable for a
comparison of discrete and true eigenvalues.
Upper eigenvalue bounds can be expected from
discontinuous Galerkin (dG) schemes
\cite{BuffaPerugia2006}, but the practically more interesting
question of guaranteed lower bounds has remained open
until the contributions
\cite{BBB2014,BBB2017}.
For a detailed exposition of the eigenvalue problem
and its numerical approximation, the reader is
referred to
\cite{Hiptmair2002,Monk,Boffi2010}
and the references therein.

In the finite element framework for coercive operators
in some Hilbert space $V$
(e.g., the Laplacian), guaranteed lower 
eigenvalue bounds were successfully derived by the 
independent contributions 
\cite{LiuOishi} and \cite{CarstensenGedicke},
which basically follow the same reasoning, illustrated
here for the first eigenpair $(\lambda, u)$
of a variational eigenvalue problem
$$
  a(u,v) = \lambda b(u,v) \quad\text{for all }v\in V
$$
with inner products $a$ and $b$ and corresponding
norms $\|\cdot\|_a$ and $\|\cdot\|_b$.
For a (possibly nonconforming) discretization $V_h$
with the first discrete eigenpair $(\lambda_h,u_h)$,
the discrete Rayleigh--Ritz principle 
\cite{WeinsteinStenger1972}
implies
$$
  \lambda_h \|v_h\|_b^2 \leq \|v_h\|_a^2
$$
for any $v_h\in V_h$.
Given an $a$-orthogonal projection operator $G_h$
(assuming $a$ is defined on the sum $V+V_h$),
this and some elementary algebraic manipulations
show
$$
  \lambda_h \|G_h u\|_b^2 
  \leq \|G_h u\|_a^2
  \leq \|u\|_a^2 - \|u-G_h u\|_a^2 .
$$
Assuming the normalization $\|u\|_b=1$ so that
$\|u\|_a^2=\lambda$, it turns out that explicit control
of $\|u-G_h u\|_b$ by $\|u-G_h u\|_a$ yields a computational
lower bound.
In \cite{LiuOishi} $G_h$ is the standard Galerkin
projection while in \cite{CarstensenGedicke} is the 
interpolation operator in a Crouzeix--Raviart method.
Further approaches to the computation of lower eigenvalue bounds
were provided by
\cite{SebestovaVejchodsky2014,CancesDussonMadayStammVohralik2018}.

In this paper we aim at extending the idea of 
\cite{LiuOishi} to the Maxwell eigenvalue problem
\eqref{e:EVP_intro} discretized with lowest-order
N\'ed\'elec (edge) elements \cite{Monk}.
The main novelty in contrast to \cite{LiuOishi} 
is the guaranteed computational control of the Galerkin
error in a linear Maxwell system with right-hand side $f$.
In general, the estimate takes the format 
$$\|u-u_{h}\|_{a} \leq M_{h} \|f\|_{b}$$
with a mesh-dependent number $M_{h}$,
for which we propose a computational upper bound in this
paper.
In \cite{LiuOishi} such bound is achieved for the Laplacian
by splitting $f$ in a piecewise polynomial part $f_h$ and
some remainder. The first part of the error is quantified
through a hypercircle principle \cite{Braess2007}
and an auxiliary global eigenvalue problem.
This idea goes back to 
the work \cite{KikuchiSaito2007} on a~posteriori error
estimators and was used in the context of eigenvalue
problems by
\cite{LiuOishi,TanakaTakayasuLiuOishi2014,Liu2015,Liu2021}.
The second part of the error is ---in the case of the Laplacian,
where $f_h$ is simply the piecewise mean of $f$---
easily controlled because it reduces to element-wise Poincar\'e
inequalities whose constants can be explicitly bounded
\cite{PayneWeinberger}.
In the present case of the Maxwell system, the situation is more
involved. The $\rot$ operator maps the N\'ed\'elec space
to the divergence-free Raviart--Thomas elements
\cite{BoffiBrezziFortin2013},
and the $L^2$-orthogonal projection to the latter
is nonlocal and explicit bounds on that projection are unknown.
In order to obtain a computable bound, we make use of
recent developments of Finite Element Exterior Calculus
\cite{ArnoldFalkWinther2006,Arnold2019},
namely the Falk--Winther projection \cite{FalkWint1}.
This family of operators commutes with the exterior derivative
and is locally defined, so that it is actually computable.
Practical implementations of the operator have been used
in the context of numerical homogenization
\cite{GallistlHenningVerfurth,HenningPerssonCode,HenningPersson}.
In this work, the advantage of the local construction is 
that the involved stability constant can be computationally
bounded from above. In a perturbation argument for
the Maxwell system, this tool replaces the Poincar\'e inequality
from the Laplacian case.
The bounds are achieved by solving local discrete
eigenvalue problems combined with standard estimates.

The main result
is a computabe upper bound $\hat M_h$ to $M_{h}$, which
results in the guaranteed lower bound 
$$
   \frac{\lambda_h}{1+\hat M_h^2\lambda_h} \leq \lambda 
$$
from Theorem~\ref{t:lowerbound}
for the $k$th Maxwell eigenvalue $\lambda$.
The quantities on the left-hand side are the $k$th discrete
eigenvalue  $\lambda_h$ and the computable 
mesh-dependent number $\hat M_h$.
In particular, the computation of $\hat M_h$ involves
guaranteed control over the bound for the 
Galerkin error in a linear Maxwell problem
and the local stability constants of the 
Falk--Winther interpolation.
The guaranteed computational bound $\hat M_h$ for $M_{h}$
is carefully described in this paper.
The mesh-dependent quantity $M_h$ is required to be uniform with respect to the
right-hand side $f$ and is therefore related to
elliptic regularity of the linear Maxwell problem on the 
specific domain of interest.
On polytopal domains it is expected to scale like
the power $h^s$ of the maximum mesh size $h$ with
some exponent $0<s\leq 1$.
This limits efficient computations to the case of
lowest-order N\'ed\'elec (edge) elements.
Such limitation is also encountered in the existing
works \cite{CarstensenGedicke,LiuOishi,Liu2015}
for the Laplacian.

\medskip
The remaining parts of this article are organized as follows.
Section~\ref{s:prelim} lists preliminaries on the 
Maxwell problem, discrete spaces, and the Falk--Winther
interpolation.
Guaranteed computational bounds on the Galerkin
error are presented in Section~\ref{s:Galerkin}.
The lower eigenvalue bounds are shown in
Section~\ref{s:eigenvaluebounds};
the practical computation of the relevant constants
is described in Section~\ref{s:constants};
and actual computations are shown
in the numerical experiments of Section~\ref{s:num}.
The remarks of Section~\ref{s:conclusion} conclude this paper.

\section{Preliminaries}\label{s:prelim}

\subsection{Notation}

Let $\Omega\subseteq\mathbb R^d$ for $d\in\{2,3\}$ be a
bounded and open polytopal Lipschitz domain,
which we assume to be 
contractible.
The involved differential operators read
\begin{equation*}
\rot v = \partial_1 v_2 - \partial_2 v_1
\text{ for } d=2
\quad\text{and}\quad
\rot v =
 \left(
 \begin{matrix}
  \partial_2 v_3-\partial_3 v_2 \\
  \partial_3 v_1 - \partial_1 v_3\\
  \partial_1 v_2 - \partial_2 v_1
 \end{matrix}
 \right)
\text{ for } d=3.
\end{equation*}
For the formal adjoint operators we write
\begin{equation*}
\Curl \phi 
  =
    \left(
     \begin{matrix}
        -\partial_2 \phi \\ \partial_1 \phi
     \end{matrix}
 \right)
\text{ for } d=2
\quad\text{and}\quad
\Curl = \rot
\text{ for } d=3
\end{equation*}
($\phi$ is a scalar function for $d=2$)
so that the integration-by-parts formula
$$
  \int_\Omega \phi \rot v \,dx
  =
  (-1)^{d-1} \int_\Omega \Curl \phi \cdot v \,dx  
$$
holds for sufficiently regular 
scalar functions ($d=2$) or vector fields ($d=3$)
$\phi$ and vector fields $v$ 
with vanishing tangential trace over
$\partial\Omega$.

Standard notation on Lebesgue and Sobolev spaces is employed
throughout this paper. Given any open set
$\omega\subseteq \mathbb R^d$, 
the $L^2(\omega)$ inner product is denoted by 
$(\cdot,\cdot)_{L^2(\omega)}$ with the norm
$\|\cdot\|_{L^2(\omega)}$.
The usual $L^2$-based first-order Sobolev space is denoted
by $H^1(\Omega)$
and $H^1_0(\Omega)$ is the subspace with vanishing
trace over $\partial\Omega$.
The space of $L^2$ vector fields over $\Omega$ with
weak divergence in $L^2(\Omega)$ is denoted by
$H(\ddiv,\Omega)$; and the subspace of divergence-free
vector fields reads $H(\ddiv^0,\Omega)$.
The space of $L^2(\Omega)$ vector fields with weak
rotation in $L^2(\Omega)$ is denoted by
$H(\rot,\Omega)$ while its subspace with vanishing
tangential trace is denoted by 
$H_0(\rot,\Omega)$.

In the context of eigenvalue problems,
the $L^2$ inner product is also denoted by $b(\cdot,\cdot)$
and the $L^2$ norm is denoted by $\|\cdot\|_b$.

On $H_0(\rot,\Omega)$, we define the bilinear form
$$
  a(v,w) := (\rot v, \rot w)_{L^2(\Omega)}
  \quad\text{for any }v,w\in H_0(\rot,\Omega).
$$
Let 
$V:= H_0(\rot,\Omega)\cap H(\ddiv^0,\Omega)$.
On $V$, the form $a$ is an inner product 
\cite[Corollary 4.8]{Monk}
and the seminorm
$\|\cdot\|_a = \sqrt{a(\cdot,\cdot)}$ is a norm on $V$.
Given a divergence-free right-hand side $f\in H(\ddiv^0,\Omega)$,
the linear Maxwell problem seeks 
$u\in V $ such that
\begin{equation}\label{e:maxwell_linear}
 a(u,v) = b(f,v)
 \quad\text{for all }v\in V.
\end{equation}
It is well known \cite{Monk}
and needed in some arguments of this article
that \eqref{e:maxwell_linear} is even satisfied for all test functions
$v$ from the larger space $H_0(\rot,\Omega)$.

Let $\mathcal T$ be a regular simplicial triangulation of
$\Omega$.
The diameter of any $T\in\mathcal T$ is denoted by
$h_T$ and $h_{\max} := \max_{T\in\mathcal T} h_T$ is the 
maximum mesh size.
Given any $T\in\mathcal T$,
the space of first-order polynomial functions over $T$
is denoted by $P_1(T)$.
The lowest-order standard finite element space 
(with or without homogeneous
Dirichlet boundary conditions) is denoted by
$$
S^1(\mathcal T)
:=
\{v\in H^1(\Omega): 
           \forall T\in\mathcal T, v|_T\in P_1(T)
           \}
\text{ and }
S^1_0(\mathcal T) := H^1_0(\Omega)\cap S^1(\mathcal T).
$$
The space of lowest-order edge elements 
\cite{Monk,BoffiBrezziFortin2013}
reads
$$
\begin{aligned}
\mathcal N_{0}(\mathcal T) =
\{ v\in H(\rot,\Omega):
  \forall T\in\mathcal T\;
   \exists \alpha_T \in \mathbb R^d\;
   \exists \beta_T \in P_1(T)^d
   \forall x\in T:
   \\
  v(x) = \alpha_T + \beta_T(x)
  \text{ and } \beta_T(x)\cdot x = 0
\}
\end{aligned}
$$
and we denote
$$
\mathcal N_{0,D}(\mathcal T)
 := \mathcal N_{0}(\mathcal T) \cap H_0(\rot,\Omega)  .
$$
The approximation of \eqref{e:maxwell_linear} with
edge elements uses the space
\begin{equation}
\label{e:Vhdef}
 V_h = \{\psi_h\in \mathcal N_{0,D}(\mathcal T) 
           : \forall v_h\in S^1_0(\mathcal T)\;
                             (\nabla v_h,\psi_h)_{L^2(\Omega)} = 0\}.
\end{equation}
The elements of $V_h$ are weakly divergence-free and need in general
not be elements of $V$, i.e., $V_h\not\subseteq V$.
It is known \cite{Monk}
that $a$ is an inner product on $V_h$.
The finite element system seeks $u_h\in V_h$ such that
\begin{equation}\label{e:maxwell_linear_discr}
 a(u_h, v_h) = b(f,v_h)
 \quad\text{for all }v_h\in V_h.
\end{equation}
We remark that, in practical computations, systems like 
\eqref{e:maxwell_linear_discr} are solved as mixed systems
with a Lagrange
multiplier enforcing the linear constraint in \eqref{e:Vhdef}.
Given $u$, its approximation $u_h=G_h u$ is called the
Galerkin projection.
This terminology is justified by the fact that the 
approach is conforming when viewed in a saddle-point
setting.
In particluar, since \eqref{e:maxwell_linear}
is satisfied for all $v\in H_0(\rot,\Omega)$,
the following ``Galerkin orthogonality''
is valid
\begin{equation}\label{e:galerkinorthogonality}
 a(u-u_h,v_h) = 0 \quad\text{for all }v_h\in V_h.
\end{equation}
The Raviart--Thomas
finite element space \cite{BoffiBrezziFortin2013}
is defined as
$$
\begin{aligned}
\mathit{RT}_0(\mathcal T)
:=
\{ v\in H(\ddiv,\Omega):
  &
  \forall T\in\mathcal T \exists (\alpha_T,\beta_T)\in\mathbb R^d\times \mathbb R
   \\
   &\forall x\in T, v|_T(x) = \alpha_T + \beta_T x
 \} .
\end{aligned}
$$

\subsection{Falk--Winther interpolation}\label{ss:FW}

Given a regular triangulation $\mathcal T$ 
and any element $T\in\mathcal T$, the element patch
built by the simplices having nontrivial intersection
with $T$ is defined as
$$
 \omega_T := 
 \operatorname{int} 
 (\cup \{K\in\mathcal T: K\cap T\neq \emptyset\} )
 .
$$
There is a projection
$\pi^{\ddiv}: H(\ddiv,\Omega)\to \mathit{RT}_0(\mathcal T)$
with local stability in the sense that there exist
constants $C_{1,\ddiv}$, $C_{2,\ddiv}$ such that
for any $T\in\mathcal T$ and any $v\in H(\ddiv,\Omega)$
we have
\begin{equation}\label{e:fwdivstab}
 \|\pi^{\ddiv} v\|_{L^2(T)}
 \leq
 C_{1,\ddiv} \|v\|_{L^2(\omega_T)}
 + h_T C_{2,\ddiv}  \|\ddiv v\|_{L^2(\omega_T)} .
\end{equation}
Furthermore,
there is a projection
$\pi^{\Curl}: H(\Curl,\Omega)\to \mathcal S_h$
where 
\begin{equation*}
 H(\Curl,\Omega) =
    \begin{cases}
       H^1(\Omega;\mathbb R^2) &\text{if }d=2 \\
       H(\rot,\Omega)          &\text{if }d=3
    \end{cases}
\quad\text{and}\quad
  \mathcal S_h =
     \begin{cases}
      S^1(\mathcal T) &\text{if }d=2 \\
      \mathcal N_0(\mathcal T)    &\text{if }d=3
     \end{cases}
\end{equation*}
with constants $C_{1,\Curl}$, $C_{2,\Curl}$ such that
for any $T\in\mathcal T$ and any $v\in H(\Curl,\Omega)$
we have
\begin{equation}\label{e:fwrotstab}
 \|\pi^{\Curl} v\|_{L^2(T)}
 \leq
 C_{1,\Curl} \|v\|_{L^2(\omega_T)}
 + h_T C_{2,\Curl} \|\Curl v\|_{L^2(\omega_T)} .
\end{equation}
The crucial property is that these two operators
commute with the exterior derivative in the sense that
$ \pi^{\ddiv}\Curl = \Curl\pi^{\Curl}$.
The corresponding commuting diagram is displayed in
Figure~\ref{f:fwoperator}.
For the construction of the operators
$\pi^{\ddiv}$ and $\pi^{\Curl}$, the reader is referred
to \cite{FalkWint1} and Section~\ref{ss:FWcomputation} below.

\begin{figure}
\begin{center}
\begin{tikzpicture}[node distance=3.0cm]
\node (Hrot) {$H(\Curl,\Omega)$};
\node (Hdiv)[right of=Hrot]{$H(\ddiv,\Omega)$};
\node (Nedelec)[below of= Hrot]{$\mathcal S_h$};
\node (RT)[right of=Nedelec]{$\mathit{RT}_0(\mathcal T)$};
\draw[transform canvas={yshift=0.5ex},->] (Hrot) --(Hdiv) node[above,midway] {\footnotesize $\Curl$};
\draw[transform canvas={yshift=0.5ex},->] (Nedelec)--(RT) node[above,midway] {\footnotesize $\Curl$};
\draw[transform canvas={xshift=0.5ex},->] (Hrot) --(Nedelec) node[right,midway] {\footnotesize $\pi^{\Curl}$};
\draw[transform canvas={xshift=0.5ex},->] (Hdiv) --(RT) node[right,midway] {\footnotesize $\pi^{\ddiv}$};
\end{tikzpicture}
\end{center}
\caption{Commuting diagram of the Falk--Winther operator.
        \label{f:fwoperator}
        }

\end{figure}

\section{Bounds on the Galerkin projection}\label{s:Galerkin}

The goal of this section is a fully computable bound
on the Galerkin error.

\subsection{\texorpdfstring{$L^2$}{L2} error control}

From elliptic regularity theory
(see
\cite[Theorem 3.50]{Monk}
and \cite{CostabelDauge2000}), it is known that
the solution $u\in V$ to \eqref{e:maxwell_linear} satisfies
$$
\|u\|_{H^s(\Omega)} + \|\rot u\|_{H^s(\Omega)} \leq C \|f\|_b
$$
for some positive $s>1/2$,
where $\|\cdot\|_{H^s(\Omega)}$ is the usual
fractional-order Sobolev space \cite{Monk}.
The Galerkin property~\eqref{e:galerkinorthogonality}
and well-known interpolation error estimates 
\cite[Theorem 5.25]{Monk}
show that
\begin{equation}
\label{e:apriori}
\|u-G_h u\|_a 
\leq
\inf_{v_h\in V_h}\|u-v_h\|_a 
\leq
C h^s \|\rot u\|_{H^s(\Omega)}.
\end{equation}
Hence,
there
exists a mesh-dependent (but $f$-independent) number $M_h$ such that
\begin{equation}\label{e:Galerkin_schranke}
  \|u-G_h u\|_a \leq M_h \|f\|_b,
\end{equation}
where it is understood that $M_{h}$ is the optimal choice
(uniformly in $\|f\|_b$). 
On convex domains, $M_h$ is proportional to the mesh size $h$
while in general, reduced regularity
implies that $M_h$ is proportional
to $h^s$ with some $0<s\leq1$.
Theorem~\ref{t:Mhtheorem} below states a 
computable upper bound $M_{h} \leq \hat M_h$.

Given some $u\in V$, the Galerkin approximation
$G_h u$ is usually not divergence-free and therefore
possesses a nontrivial $L^2$ orthogonal decomposition
\begin{equation}\label{e:helmholtz}
    G_h u = \nabla \phi + R
\end{equation}
with $\phi \in H^1_0(\Omega)$ and $R\in H(\ddiv^0,\Omega)$.
The inclusion $G_h u\in H_0(\rot,\Omega)$ furthermore
shows that $R\in V$.
The following lemma states an $L^2$ error estimate.
The proof uses the classical Aubin--Nitsche duality technique.

\begin{lemma}\label{l:L2}
The divergence-free part $u-R$ of the error $u-G_h u$ satisfies
the following error estimate
 $$
   \| u - R \|_b \leq M_h \| u - G_h u \|_a  .
 $$ 
\end{lemma}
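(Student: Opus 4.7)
The plan is a standard Aubin--Nitsche duality argument, adapted to the Maxwell setting by exploiting the Helmholtz decomposition \eqref{e:helmholtz} to reduce the error to its solenoidal part.

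First I would set $g := u - R$ and observe that $g \in H(\ddiv^0,\Omega)$ (since both $u\in V$ and $R\in V$ are divergence-free) and $g \in H_0(\rot,\Omega)$, hence $g \in V$. This is the key reason for extracting the $R$-part in the first place: the duality bound \eqref{e:Galerkin_schranke} that defines $M_h$ requires a divergence-free right-hand side, so we cannot duality-test against $u - G_h u$ directly but only against its solenoidal component $g$.

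Next I would introduce the dual problem: find $w \in V$ with
\begin{equation*}
 a(w,v) = b(g,v) \quad \text{for all } v \in V,
\end{equation*}
and let $G_h w \in V_h$ denote its Galerkin approximation. By the definition of $M_h$ in \eqref{e:Galerkin_schranke} applied to the admissible right-hand side $g$, we have $\|w - G_h w\|_a \le M_h \|g\|_b$. Testing the dual problem with $v = g = u - R$ (recalling that \eqref{e:maxwell_linear}, and hence the dual, holds on the larger space $H_0(\rot,\Omega)$) gives
\begin{equation*}
 \|u - R\|_b^2
 = b(g,g)
 = a(w, u - R)
 = a(w, u - G_h u) + a(w, \nabla\phi),
\end{equation*}
using the decomposition $u - R = (u - G_h u) + \nabla\phi$ from \eqref{e:helmholtz}. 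The second term vanishes because $\rot\nabla\phi = 0$, so $a(w,\nabla\phi) = 0$.

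Finally, I would invoke Galerkin orthogonality \eqref{e:galerkinorthogonality} to subtract $G_h w \in V_h$ from the first slot of $a(w, u-G_h u)$, yielding
\begin{equation*}
 \|u - R\|_b^2
 = a(w - G_h w,\, u - G_h u)
 \le \|w - G_h w\|_a \, \|u - G_h u\|_a
 \le M_h \|g\|_b \, \|u - G_h u\|_a,
\end{equation*}
and dividing by $\|g\|_b = \|u-R\|_b$ (assuming it is nonzero; otherwise the claim is trivial) concludes the proof. The only subtle point, and the one that genuinely uses the structure of the Maxwell problem, is the verification that the dual right-hand side lies in $H(\ddiv^0,\Omega)$, which is precisely why the Helmholtz decomposition \eqref{e:helmholtz} was set up and why the statement controls $\|u-R\|_b$ rather than the full $L^2$ error $\|u-G_h u\|_b$.
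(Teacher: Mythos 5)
Your proof is correct and follows essentially the same Aubin--Nitsche duality argument as the paper: same dual problem with right-hand side $u-R$, same use of the Helmholtz decomposition \eqref{e:helmholtz} together with $\rot\nabla = 0$ to pass from $u-R$ to $u-G_h u$ in the $a$-form, same Galerkin orthogonality step, and same application of \eqref{e:Galerkin_schranke}. The only differences are notational (your $w,g$ versus the paper's $z,e$) and slightly more explicit bookkeeping of the gradient term.
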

\begin{proof}
 The error $e:=u-R$ is divergence-free and thus
 $e\in V$.
 There exists
 a unique solution $z\in V$ satisfying
 $$
    a(z,v) = b(e,v) \quad\text{for all }v\in V.
 $$
Since $e\in V$, we infer with the symmetry of $a$ 
and $\rot\nabla = 0$ that

$$ 
  \|e\|_b^2 = b(e,e) = a(z, e) = a(e,z) = a(u-G_h u, z) .
$$
The Galerkin property 
\eqref{e:galerkinorthogonality}
shows that $u-G_h u$ is $a$-orthogonal
to any element of $V_h$. Thus
$$ 
  \|e\|_b^2 
 = a(u-G_h u, z -G_h z) 
 \leq \| u-G_h u\|_a \; \| z-G_h z\|_a . 
$$
The application of \eqref{e:Galerkin_schranke} to
$z$ with right-hand side $e$ reveals that
the Galerkin error $\| z-G_h z\|_a$
is bounded by $M_h \| e\|_b$,
which implies the asserted bound.
\end{proof}

\subsection{Perturbation of the right-hand side}

In this section, we quantify the error that arises in the
solution of the linear Maxwell system when
the right-hand side $f$ is replaced by a piecewise polynomial approximation $f_h$.

The regular decomposition
\cite[Proposition 4.1]{CostabelMcIntosh}
states that there exists a constant $C_{RD}$ such for that every 
$f\in H(\ddiv^0,\Omega)$
there exists
$\beta\in H^1(\Omega;\mathbb R^{2d-3})$ such that 
\begin{equation}\label{e:regulardecomp}
\Curl \beta = f
\qquad \text{and}\qquad
\|D \beta\|_{L^2(\Omega)} \leq C_{RD} \|f\|_{L^2(\Omega)} 
\end{equation}
where $D\beta$ denotes the derivative (Jacobian matrix)
of the vector field $\beta$.
We remark that in the two-dimensional case
the field $\Curl\beta$ is a rotation of $D\beta$
so that $C_{RD}=1$ if $d=2$.

Given $T\in\mathcal T$ and its element patch
$\omega_T$, the Poincar\'e inequality states for any
$H^1$ function $v$ with vanishing average over $\omega_T$
that $\|v\|_{L^2(\omega_T)}\leq C(T) \|D v\|_{L^2(\omega_T)}$
with a constant $C(T)$ proportional to $h_T$.
By $\tilde c$ we denote the smallest constant such that
$$
  \|v\|_{L^2(\omega_T)}\leq h_T \tilde c \|D v\|_{L^2(\omega_T)}
$$
holds for all such functions uniformly in $T\in\mathcal T$.

Recall that, due to its commutation property,
the Falk--Winther interpolation $\pi^{\ddiv}$
maps $H(\ddiv^0,\Omega)$ to
$\mathit{RT}_0(\mathcal T)\cap H(\ddiv^0,\Omega)$.
We denote the overlap constant of element patches
by
$$
C_{\mathit{OL}} 
  = 
  \max_{T\in\mathcal T} 
  \operatorname{card}\{K\in\mathcal T : 
          T\subseteq   \overline {\omega_K} \}.
$$

\begin{lemma}\label{l:perturb}
Let $f\in H(\ddiv^0,\Omega)$ and let 
$
 \tilde f 
 = \pi^{\ddiv} f \in \mathit{RT}_0(\mathcal T)\cap H(\ddiv^0,\Omega)
$
be its Falk--Winther interpolation.
Let $u$ and $\tilde u$ denote the solution to
\eqref{e:maxwell_linear} with right-hand side $f$ and $\tilde f$,
respectively.
Then
\begin{equation*}
\|u-\tilde u\|_a
 \leq
 \sqrt{C_{\mathit{OL}} }
h_{\max}
\hat C
 \|f \|_{L^2(\Omega)} 
\end{equation*}
for the constant
\begin{align*}
\hat C :=
\begin{cases}
(1+C_{1,\Curl}) \tilde c + C_{2,\Curl}
 & \text{if }d=2, \\
\sqrt{2(((1+C_{1,\Curl}) \tilde c C_{\mathit{RD}})^2    
 + C_{2,\Curl}^2) }
  &\text{if }d=3.
\end{cases}
\end{align*}
\end{lemma}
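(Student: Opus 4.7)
The plan is to convert the perturbation error into a local approximation error for the Falk--Winther operator $\pi^{\Curl}$ applied to a regular potential of $f$. First, set $w := u-\tilde u$. Subtracting the two instances of \eqref{e:maxwell_linear} (which, as recalled after~\eqref{e:maxwell_linear}, hold for every $v\in H_0(\rot,\Omega)$) and testing with $v=w\in V\subset H_0(\rot,\Omega)$ yields $\|w\|_a^2 = b(f-\tilde f,w)$.

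The decisive step is to rewrite $f-\tilde f$ through the commuting diagram. Invoke the regular decomposition \eqref{e:regulardecomp} to obtain a potential $\beta$ with $\Curl\beta = f$ and $\|D\beta\|_{L^2(\Omega)}\leq C_{\mathit{RD}}\|f\|_{L^2(\Omega)}$. Because $\pi^{\ddiv}\Curl=\Curl\pi^{\Curl}$, one has $\tilde f = \pi^{\ddiv}\Curl\beta = \Curl\pi^{\Curl}\beta$, and hence $f-\tilde f = \Curl(\beta-\pi^{\Curl}\beta)$. Integration by parts is legitimate since $w\wedge\nu=0$, and together with Cauchy--Schwarz this produces
$$
 \|w\|_a^2 \leq \|\beta-\pi^{\Curl}\beta\|_{L^2(\Omega)}\, \|w\|_a,
$$
reducing the task to bounding $\|\beta-\pi^{\Curl}\beta\|_{L^2(\Omega)}$.

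For the local analysis, fix $T\in\mathcal T$ and let $\bar\beta_T$ denote the integral mean of $\beta$ on $\omega_T$ (componentwise in the vector-valued case). Since $\mathcal S_h$ contains the constants, the linear projection $\pi^{\Curl}$ reproduces $\bar\beta_T$, so a triangle inequality gives
$$
 \|\beta-\pi^{\Curl}\beta\|_{L^2(T)}
 \leq \|\beta-\bar\beta_T\|_{L^2(T)} + \|\pi^{\Curl}(\beta-\bar\beta_T)\|_{L^2(T)}.
$$
Bound the first summand by $h_T\tilde c\,\|D\beta\|_{L^2(\omega_T)}$ using the patch Poincar\'e inequality, and the second by the Falk--Winther stability~\eqref{e:fwrotstab}, noting that $\Curl(\beta-\bar\beta_T)=f$; this yields the element-wise estimate
$$
 \|\beta-\pi^{\Curl}\beta\|_{L^2(T)}
 \leq h_T\bigl[(1+C_{1,\Curl})\tilde c\,\|D\beta\|_{L^2(\omega_T)}
                 + C_{2,\Curl}\|f\|_{L^2(\omega_T)}\bigr].
$$

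Squaring, summing over $T$, and absorbing the patch multiplicities into $C_{\mathit{OL}}$ then gives the global bound. In $d=3$ one uses $(a+b)^2\leq 2(a^2+b^2)$ together with the global estimate $\|D\beta\|_{L^2(\Omega)}\leq C_{\mathit{RD}}\|f\|_{L^2(\Omega)}$, producing the constant with the factor $\sqrt{2}$ as stated. In $d=2$, $\beta$ is scalar and $|D\beta|=|\Curl\beta|=|f|$ pointwise, so $C_{\mathit{RD}}$ is not needed and the two summands can be added linearly, yielding the sharper constant $(1+C_{1,\Curl})\tilde c+C_{2,\Curl}$. The only non-routine point in the argument is the interplay of the regular decomposition with the commuting diagram, which is precisely what allows~\eqref{e:fwrotstab} to be applied to the auxiliary potential $\beta$ instead of attempting a direct bound for an operator on $H(\ddiv^0,\Omega)$, for which no local stability estimate is available.
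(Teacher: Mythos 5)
Your proposal is correct and follows the same route as the paper's proof: reduce $\|u-\tilde u\|_a$ to $\|\beta-\pi^{\Curl}\beta\|_{L^2(\Omega)}$ via the regular decomposition, commuting diagram, and integration by parts; then split $\beta-\pi^{\Curl}\beta$ element-wise using a patch mean, the local Poincar\'e inequality, and the stability estimate~\eqref{e:fwrotstab}; and finally handle $d=2$ by the identity $|D\beta|=|f|$ and $d=3$ by Young's inequality together with $\|D\beta\|_{L^2(\Omega)}\leq C_{\mathit{RD}}\|f\|_{L^2(\Omega)}$. The only superficial differences are notational (you write $w$ for what the paper calls $e$, and you bound $\|\beta-\bar\beta_T\|_{L^2(T)}$ where the paper directly inserts $\|\beta-\bar\beta_T\|_{L^2(\omega_T)}$), but the argument and the resulting constant $\hat C$ agree exactly.
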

\begin{proof}
 Abbreviate $e:=u-\tilde u$.
 The solution properties imply
 $$
   \| u-\tilde u\|_a^2 = a(u-\tilde u,e) 
   =
   b(f-\tilde f,e)
   =
   b(f-\pi^{\ddiv} f,e) .
 $$
From the regular decomposition
\eqref{e:regulardecomp}
and the commuting property
of the operators $\pi^{\ddiv}$, $\pi^{\Curl}$ we obtain
$$
 f - \pi^{\ddiv} f 
 = \Curl \beta-\pi^{\ddiv} \Curl \beta
 = \Curl (\beta - \pi^{\Curl} \beta).
$$
Thus, integration by parts and the homogeneous boundary
conditions of $e$ imply
\begin{align*}
b(f-\pi^{\ddiv} f,e)
&=
b(\Curl (\beta - \pi^{\Curl} \beta),e)
\\
&=
(-1)^{d-1}
b(\beta - \pi^{\Curl} \beta,\rot e)
\leq 
\|\beta - \pi^{\Curl} \beta \|_b \|e\|_a.
\end{align*}
The combination with the above chain of identities
implies
\begin{equation}
\label{e:perturb_a}
\| u-\tilde u\|_a \leq \|\beta - \pi^{\Curl} \beta \|_b .
\end{equation}

Let $T\in\mathcal T$ be arbitrary.
Since $\pi^{\Curl}$ locally preserves constants, we obtain for 
the patch average
$$
    \bar\beta := \fint_{\omega_T} \beta \,dx
$$
that the difference
$\beta - \pi^{\Curl} \beta$ can be split with
the triangle inequality and 
the inclusion $T\subseteq \overline{\omega_T}$ 
as follows
\begin{align*}
 \| \beta - \pi^{\Curl} \beta\|_{L^2(T)}
 \leq \| \beta - \bar{\beta}\|_{L^2(\omega_T)}
           + \|\pi^{\Curl}(\beta - \bar{\beta})\|_{L^2(T)} .
 \end{align*}
 Estimate \eqref{e:fwrotstab} with $\Curl \beta = f$
 followed by the Poincar\'e inequality on
 $\omega_T$ with constant $h_T\tilde c$ 
 thus reveal
 \begin{align*}
\| \beta - \pi^{\Curl} \beta\|_{L^2(T)}
\leq 
  h_T \tilde c(1+C_{1,\Curl}) 
   \|D \beta \|_{L^2(\omega_T)}
 + C_{2,\Curl} h_T \|f \|_{L^2(\omega_T)} .
\end{align*}
This local result generalizes
to the whole domain $\Omega$ as follows
\begin{align}
\begin{aligned}\label{e:lemmaproofformel}
\| \beta - \pi^{\Curl} \beta\|_b^2
&=
\sum_{T \in \mathcal{T}} \| \beta - \pi^{\Curl} \beta\|_{L^2(T)}^2
\\
&\leq 
\sum_{T \in \mathcal{T}} 
(
h_T \tilde c(1+C_{1,\Curl}) 
   \|D\beta \|_{L^2(\omega_T)}
 + C_{2,\Curl} h_T \|f \|_{L^2(\omega_T)} 
 )^2
.
\end{aligned}
\end{align}
If $d=2$ we have the identity
$\|D\beta \|_{L^2(\omega_T)}=\|f \|_{L^2(\omega_T)} $
and therefore conclude
\begin{align*}
\| \beta - \pi^{\Curl} \beta\|_b^2
&\leq 
\sum_{T \in \mathcal{T}}
h_T^2 
(\tilde c(1+C_{1,\Curl}) + C_{2,\Curl})^2
 \|f \|_{L^2(\omega_T)}^2
\\
&\leq
C_{\mathit{OL}} h_{\max}^2 
(\tilde c(1+C_{1,\Curl}) + C_{2,\Curl})^2 \|f \|_{L^2(\Omega)}^2
  .
\end{align*}
The squared expression in parentheses on the right-hand
side equals $\hat C$ if $d=2$, whence the assertion follows in that case.
If $d=3$, we again use \eqref{e:lemmaproofformel}
and compute
\begin{align*}
\| \beta - \pi^{\Curl} \beta\|_b^2
&\leq 
2
\sum_{T \in \mathcal{T}} h_T^2
\left(
   (\tilde c(1+C_{1,\Curl}) 
   \|D\beta \|_{L^2(\omega_T)}  )^2
 + (C_{2,\Curl}  \|f \|_{L^2(\omega_T)} )^2
 \right)
\\ 
&\leq
2 C_{\mathit{OL}} h_{\max}^2 
\left(
(\tilde c(1+C_{1,\Curl}) 
   \|D \beta \|_{L^2(\Omega)}  )^2
 + (C_{2,\Curl} \|f \|_{L^2(\Omega)} )^2
 \right) .
\end{align*}
We estimate $\|D \beta \|_{L^2(\Omega)}$
with $\eqref{e:regulardecomp}$
and obtain
\begin{align*}
\| \beta - \pi^{\Curl} \beta\|_b
\leq
\sqrt{2 C_{\mathit{OL}} }
h_{\max}
\hat C
 \|f \|_{L^2(\Omega)} .
\end{align*}
The combination with \eqref{e:perturb_a} concludes the proof.
\end{proof}

\subsection{Error bound on the Galerkin projection}
\label{ss:galerkinerror}
We denote the space of divergence-free Raviart--Thomas functions
by $X_h := \mathit{RT}_0(\mathcal T)\cap H(\ddiv^0,\Omega)$.
Given $f_h\in X_h$, let
$$
 \mathcal S_{f_h} := 
  \{v_h\in \mathcal S_h : \Curl v_h = (-1)^{d-1}f_h\}.
$$
Define
\begin{equation}\label{e:kappahdef}
\kappa_h :=
\max_{f_h\in X_h\setminus\{0\}}
\min_{v_h\in V_h}
 \min_{\tau_h\in \mathcal S_{f_h}}
 \frac{\|\tau_h - \rot v_h\|_{L^2(\Omega)}}{\|f_h\|_{L^2(\Omega)}}.
\end{equation}

The next lemma states that, for discrete data,
the Galerkin error can be quantified through
$\kappa_h$.

\begin{lemma}\label{l:kappah}
Let $f_h \in X_h$, 
and let 
$\tilde u \in V$
and $\tilde u_h\in V_h$
be solutions to
\eqref{e:maxwell_linear} and \eqref{e:maxwell_linear_discr},
respectively, with right-hand side $f=f_h$.
Then the following computable error estimate holds
$$ 
 \| \tilde u-\tilde u_h\|_a \leq \kappa_{h} \|f_h\|_b .
$$
\end{lemma}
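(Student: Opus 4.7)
The plan is to combine Galerkin orthogonality for the discrete Maxwell system with a hypercircle (Prager--Synge style) argument: both $\rot\tilde u$ and any $\tau_h\in\mathcal S_{f_h}$ satisfy the equilibrium condition $\Curl\,\cdot=(-1)^{d-1}f_h$, so their difference has vanishing $\Curl$. Integration by parts then makes it $L^2$-orthogonal to $\rot e$ for any tangentially vanishing $e$, which kills the awkward cross term in a Pythagorean-type decomposition and reduces the error to a purely discrete quantity.

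Concretely, I would set $e:=\tilde u-\tilde u_h\in H_0(\rot,\Omega)$ and, for an arbitrary $v_h\in V_h$, apply Galerkin orthogonality \eqref{e:galerkinorthogonality} to rewrite
$$
\|e\|_a^2 = a(e,\tilde u - v_h) = (\rot e,\rot\tilde u - \rot v_h)_{L^2(\Omega)}.
$$
Given any $\tau_h\in\mathcal S_{f_h}$, inserting $\pm\tau_h$ into the second argument splits the right-hand side into
$(\rot e,\rot\tilde u - \tau_h)_{L^2(\Omega)} + (\rot e,\tau_h - \rot v_h)_{L^2(\Omega)}$.

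The crucial step is showing that the first summand vanishes. Since \eqref{e:maxwell_linear} with $f=f_h$ holds for all test functions in $H_0(\rot,\Omega)$ (as noted just below that equation), one obtains $\Curl\rot\tilde u=(-1)^{d-1}f_h$ in the distributional sense, while by definition $\Curl\tau_h=(-1)^{d-1}f_h$. Hence $\rot\tilde u-\tau_h\in L^2(\Omega)$ with $\Curl(\rot\tilde u-\tau_h)=0$, so it lies in $H(\Curl,\Omega)$. Applying the integration-by-parts identity from Section~\ref{s:prelim} with $\phi=\rot\tilde u-\tau_h$ and the tangentially vanishing $v=e$ gives
$$
(\rot e,\rot\tilde u - \tau_h)_{L^2(\Omega)} = (-1)^{d-1}(e,\Curl(\rot\tilde u-\tau_h))_{L^2(\Omega)} = 0.
$$
A Cauchy--Schwarz bound on the surviving term and division by $\|e\|_a$ then yield $\|e\|_a\le \|\tau_h-\rot v_h\|_{L^2(\Omega)}$ for arbitrary $v_h\in V_h$ and $\tau_h\in\mathcal S_{f_h}$; minimizing and invoking the definition \eqref{e:kappahdef} of $\kappa_h$ finishes the argument.

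The main obstacle, modest in nature, is the careful justification of the integration by parts and of the fact that $\mathcal S_{f_h}$ is nonempty. The latter follows from exactness of the discrete de Rham sequence $\mathcal S_h\xrightarrow{\Curl}\mathit{RT}_0(\mathcal T)$ on the contractible domain $\Omega$, which ensures that every divergence-free Raviart--Thomas field is the discrete $\Curl$ of some element of $\mathcal S_h$; the former is valid because $\rot\tilde u-\tau_h$ lies in $H(\Curl,\Omega)$ and $e$ has vanishing tangential trace by construction.
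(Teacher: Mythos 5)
Your proposal is correct and, in essence, uses the same two ingredients as the paper's proof: the key orthogonality $(\rot\tilde u-\tau_h,\rot w)_{L^2(\Omega)}=0$ for $w\in H_0(\rot,\Omega)$ (justified exactly as you do, via $\Curl(\rot\tilde u-\tau_h)=0$ and integration by parts) and the Galerkin/best-approximation property to pass from a generic $v_h\in V_h$ to $\tilde u_h$. The only cosmetic difference is that the paper packages the orthogonality as the Prager--Synge hypercircle identity $\|\tau_h-\rot\tilde u\|^2+\|\rot(\tilde u-v_h)\|^2=\|\tau_h-\rot v_h\|^2$ and then drops the first term, whereas you insert $\pm\tau_h$ and invoke Cauchy--Schwarz directly after using Galerkin orthogonality to write $\|e\|_a^2=a(e,\tilde u-v_h)$; these are equivalent routes to the same bound.
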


\begin{proof}
Let $v_h\in V_h$ be arbitrary
and let $\tau_h\in\mathcal S_{f_h}$.
Integration by parts implies the orthogonality
$$
 (\tau_h - \rot \tilde u,
  \rot (\tilde u-v_h) )_{L^2(\Omega)} = 0.
$$
Thus the following hypercircle identity holds
\begin{equation}\label{e:hypercircle}
 \|\tau_h - \rot \tilde u\|_{L^2(\Omega)}^2 
  + 
 \|\rot (\tilde u-v_h) \|_{L^2(\Omega)}^2
 =
 \|\tau_h - \rot v_h\|_{L^2(\Omega)}^2.
\end{equation}
Thus,
$$
\| \tilde u-v_h \|_a
=
\| \rot (\tilde u-v_h) \|_{L^2(\Omega)}
 \leq 
 \min_{\tau\in \mathcal S_{f_h}}
 \|\tau_h - \rot v_h\|_{L^2(\Omega)}.
$$
The left-hand side is minimal for 
$\tilde u_h$ amongst all elements
$v_h\in V_h$, whence
$$
\| \tilde u-\tilde u_h \|_a
 \leq 
 \min_{v_h\in V_h}
 \min_{\tau\in \mathcal S_{f_h}}
 \|\tau_h - \rot v_h\|_{L^2(\Omega)}
 \leq \kappa_h \|f_h\|_{L^2(\Omega)}
$$
where the last estimate follows
from the definition \eqref{e:kappahdef}.
\end{proof}

\begin{remark}
 Hypercircle (or Prager--Synge) identities like \eqref{e:hypercircle}
 are often used in the context of a~posteriori error estimation
 \cite{Braess2007,Vejchodsky2006,Vejchodsky2012,ChaumontErnVohralik2021}.
\end{remark}

\begin{remark}
 The asymptotic convergence rate of $\kappa_h$ 
 is that of the sum of the primal and dual mixed
 finite element error.
 This can be seen from taking
 the minimum over $v_h$ and $\tau_h$
 in the hypercircle relation \eqref{e:hypercircle}
 for the ``worst'' $L^2$-normalized right-hand side
 $f_h$.
 Thus, $\kappa_h$ is proportional to $h^s$ with the 
 elliptic regularity index $s$ from \eqref{e:apriori}.
\end{remark}

For possibly non-discrete $f$,
the Galerkin error is bounded by the 
following perturbation argument.

\begin{theorem}\label{t:Mhtheorem}
Given $f\in H(\ddiv^0,\Omega)$,
let $u\in V$ solve \eqref{e:maxwell_linear}
and let $u_h\in V_h$ solve \eqref{e:maxwell_linear_discr}.
Let $\hat M_h$ with
$$
\hat M_h\geq (h_{\max}\hat C
   +
   \kappa_h C_{1,\ddiv}
   )
   \sqrt{ C_{\mathit{OL}}}
$$
(with $\hat C$ from Lemma~\ref{l:perturb})
be given.
Then, the following error bound is satisfied 
 $$
  \| u-u_h \|_a
  \leq \hat M_h \|f\|_{L^2(\Omega)} .
 $$
In particular, $M_{h} \leq \hat M_h$.
\end{theorem}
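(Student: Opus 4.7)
The plan is to exploit the quasi-best-approximation property of the Galerkin projection and insert the auxiliary discrete-data solutions $\tilde u$ and $\tilde u_h$ corresponding to the Falk--Winther interpolated right-hand side $\tilde f := \pi^{\ddiv} f$. Note that since $f\in H(\ddiv^0,\Omega)$ and $\pi^{\ddiv}$ commutes with the exterior derivative, $\tilde f$ is indeed divergence-free and lies in $X_h$, so Lemma~\ref{l:kappah} is applicable.

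First I would observe that, by the Galerkin orthogonality \eqref{e:galerkinorthogonality}, the discrete solution $u_h$ is the $a$-best approximation of $u$ from $V_h$. Since $\tilde u_h\in V_h$, this yields
$$
 \|u-u_h\|_a \leq \|u-\tilde u_h\|_a \leq \|u-\tilde u\|_a + \|\tilde u-\tilde u_h\|_a.
$$
Next I would control each term on the right-hand side separately. Lemma~\ref{l:perturb} applied to the pair $(u,\tilde u)$ immediately provides
$$
 \|u-\tilde u\|_a \leq \sqrt{C_{\mathit{OL}}}\,h_{\max}\hat C\,\|f\|_{L^2(\Omega)}.
$$
For the second term, Lemma~\ref{l:kappah} (applicable because $\tilde f\in X_h$) gives
$$
 \|\tilde u-\tilde u_h\|_a \leq \kappa_h\,\|\tilde f\|_{L^2(\Omega)}.
$$

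The remaining step is to bound $\|\tilde f\|_{L^2(\Omega)}$ in terms of $\|f\|_{L^2(\Omega)}$. Here I would invoke the local stability estimate \eqref{e:fwdivstab}, whose second term drops because $\ddiv f = 0$, so elementwise
$$
 \|\pi^{\ddiv} f\|_{L^2(T)} \leq C_{1,\ddiv}\,\|f\|_{L^2(\omega_T)}.
$$
Squaring and summing over $T\in\mathcal T$ and using the finite overlap constant $C_{\mathit{OL}}$ of the patches $\{\omega_T\}$ yields
$$
 \|\tilde f\|_{L^2(\Omega)} \leq C_{1,\ddiv}\sqrt{C_{\mathit{OL}}}\,\|f\|_{L^2(\Omega)}.
$$

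Assembling the three estimates gives
$$
 \|u-u_h\|_a \leq \bigl(h_{\max}\hat C + \kappa_h C_{1,\ddiv}\bigr)\sqrt{C_{\mathit{OL}}}\,\|f\|_{L^2(\Omega)} \leq \hat M_h\,\|f\|_{L^2(\Omega)},
$$
which is the claimed bound, and consequently $M_h\leq \hat M_h$ by the definition of $M_h$ as the optimal constant in \eqref{e:Galerkin_schranke}. The main obstacle is really just the bookkeeping of constants: the result is essentially immediate once one recognises that (i) quasi-best-approximation lets us swap $u_h$ for $\tilde u_h$ at no cost and (ii) the $C_{2,\ddiv}$ contribution in \eqref{e:fwdivstab} vanishes because the right-hand side under consideration is genuinely divergence-free, so only $C_{1,\ddiv}$ enters $\hat M_h$.
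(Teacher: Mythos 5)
Your proposal is correct and follows essentially the same route as the paper's own proof: best-approximation to insert $\tilde u_h$, the triangle inequality to split into the perturbation term and the discrete-data term, Lemma~\ref{l:perturb} and Lemma~\ref{l:kappah} for the respective pieces, and the local stability \eqref{e:fwdivstab} (with $\ddiv f=0$) together with patch overlap to bound $\|\pi^{\ddiv}f\|_{L^2(\Omega)}$. No substantive differences.
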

\begin{proof}
Let $f_h=\pi^{\ddiv}f$ denote the 
Falk--Winther interpolation of $f$
and denote as in Lemma~\ref{l:kappah}
by $\tilde u$ and $\tilde u_h$
the solutions with respect to $f_h$.
Since $u_h\in V_h$ minimizes the error
$\|u-v_h\|_a$ amongst all $v_h\in V_h$,
we have
$\|u-u_h\|_a\leq\|u-\tilde u_h\|_a$.
 The triangle inequality then leads to
 \begin{equation}\label{e:Mhlemma_a}
   \| u-u_h \|_a
   \leq 
   \| u-\tilde u \|_a 
    + \| \tilde u-\tilde u_h \|_a .
 \end{equation}
 The second term on the right-hand side is bounded through
 Lemma~\ref{l:kappah} as follows
 \begin{equation*}
   \| \tilde u-\tilde u_h \|_a 
   \leq
   \kappa_h \| f_h\|_{L^2(\Omega)}
   = \kappa_h \| \pi^{\ddiv} f\|_{L^2(\Omega)} .
 \end{equation*}
 The local stability \eqref{e:fwdivstab} of $\pi^{\ddiv}$
 (note that $\ddiv f=0$)
 and the overlap of element patches
 imply
 $$
   \| \pi^{\ddiv} f\|_{L^2(\Omega)}^2
   =
   \sum_{T\in\mathcal T} \| \pi^{\ddiv} f\|_{L^2(T)}^2
   \leq 
   \sum_{T\in\mathcal T} 
     C_{1,\ddiv}^2 \| f\|_{L^2(\omega_T)}^2
   \leq
    C_{\mathit{OL}} C_{1,\ddiv}^{2} \|f\|_{L^2(\Omega)}^2 .
 $$
 Thus,
 \begin{equation}\label{e:Mhlemma_b}
  \| \tilde u-\tilde u_h \|_a
   \leq 
   \kappa_h \sqrt{ C_{\mathit{OL}}}C_{1,\ddiv}\|f\|_{L^2(\Omega)} .
 \end{equation}
 The first term on the right-hand side
 of \eqref{e:Mhlemma_a}
 is bounded through Lemma~\ref{l:perturb}
 by $\sqrt{C_{\mathit{OL}}} h_{\max}\hat C$.
 The combination of this bound with
 \eqref{e:Mhlemma_a} and \eqref{e:Mhlemma_b} concludes the proof.
\end{proof}

The foregoing theorem shows that a computable upper bound
$\hat M_h$ to $M_h$ can be found if 
values or upper bounds
of $\hat C$, $\kappa_h$, $C_{1,\ddiv}$ are
available.
Their computation is described in Section~\ref{s:constants}.

\section{Eigenvalue problem and lower bound}
\label{s:eigenvaluebounds}
The Maxwell eigenvalue problem seeks pairs
$(\lambda,u)\in \mathbb R\times V$ with
$\|u\|_b=1$ such that
\begin{equation}\label{e:maxwell_EVP}
 a(u,v) = \lambda b(u,v) 
 \quad\text{for all }v\in V.
\end{equation}
The condition $u\in V$ implies that $\ddiv u=0$
and, thus, the non-compact part of the spectrum
of the $\Curl\rot$ operator (corresponding to gradient
fields as eigenfunctions) is sorted out in this formulation.
It is well known \cite{Monk} that the eigenvalues to
\eqref{e:maxwell_EVP} form an infinite discrete set
$$
  0<\lambda_1\leq \lambda_2\leq \dots 
  \quad\text{with }\lim_{j\to\infty}\lambda_j=\infty.
$$
The discrete counterpart seeks $(\lambda_h,u_h)\in\mathbb R \times V_h$
with $\|u_h\|_b=1$ such that
\begin{equation}\label{e:maxwell_EVP_discr}
 a(u_h,v_h) = \lambda_h b(u_h,v_h) 
 \quad\text{for all }v_h\in V_h.
\end{equation}

We now focus on the $k$th eigenpair $(\lambda,u)$
and its approximation $(\lambda_h,u_h)$.
Recall $M_h$ from \eqref{e:Galerkin_schranke},
which in practice is bounded by 
$\hat M_h$ from Theorem~\ref{t:Mhtheorem}.

\begin{theorem}\label{t:lowerbound}
Let $(\lambda,u)\in\mathbb R\times V$ with
$\|u\|_b=1$ be the $k$th eigenpair to
\eqref{e:maxwell_EVP} and 
 $(\lambda_h,u_h)\in \mathbb R\times V_h$ 
 with $\|u_h\|_b=1$ be the $k$th discrete
 eigenpair to \eqref{e:maxwell_EVP_discr}.
 The following lower bound holds
 $$
   \frac{\lambda_h}{1+M_h^2\lambda_h} \leq \lambda .
$$
In particular, we have the computable guaranteed lower bound 
$$
   \frac{\lambda_h}{1+\hat M_h^2\lambda_h} \leq \lambda 
$$
with $\hat M_h$ from Theorem~\ref{t:Mhtheorem}.
\end{theorem}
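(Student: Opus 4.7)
My plan is to adapt the Liu--Oishi reasoning sketched in the introduction to the present nonconforming Nédélec setting, using the Helmholtz decomposition \eqref{e:helmholtz} and Lemma~\ref{l:L2} in place of the conforming $L^2$ error estimate. Since the theorem is stated for the $k$-th eigenpair, I would begin from the discrete Courant--Fischer min-max applied with the trial space $G_h E_k \subseteq V_h$, where $E_k$ denotes the span of the first $k$ continuous eigenfunctions. Provided $h$ is small enough for $G_h$ to be injective on $E_k$ (a routine consequence of the a priori bound \eqref{e:apriori}), one has
$$
\lambda_h \le \max_{v^*\in E_k\setminus\{0\}} \frac{\|G_h v^*\|_a^2}{\|G_h v^*\|_b^2},
$$
so it suffices to establish the claimed bound for a fixed maximizer $v^*$ normalized by $\|v^*\|_b = 1$; such $v^*$ satisfies $\|v^*\|_a^2 \le \lambda$ by $b$-orthonormality of the eigenfunctions.

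Next I would exploit the Helmholtz decomposition $G_h v^* = \nabla\phi + R$ from \eqref{e:helmholtz}. Since $\operatorname{rot}(\nabla\phi) = 0$ and the two summands are $L^2$-orthogonal, one has the identities $\|G_h v^*\|_a = \|R\|_a$, $\|v^*-R\|_a = \|v^*-G_h v^*\|_a$, and $\|G_h v^*\|_b^2 \ge \|R\|_b^2$. Combining these with the Pythagorean identity $\|G_h v^*\|_a^2 = \|v^*\|_a^2 - \|v^*-G_h v^*\|_a^2$, which is just Galerkin orthogonality \eqref{e:galerkinorthogonality}, and with the Rayleigh bound $\lambda_h\|G_h v^*\|_b^2 \le \|G_h v^*\|_a^2$, yields the intermediate inequality
$$
\lambda_h\,\|R\|_b^2 + \|v^*-R\|_a^2 \le \lambda.
$$

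Finally, Lemma~\ref{l:L2} provides $\|v^*-R\|_b \le M_h\,\|v^*-R\|_a$. Combined with the triangle inequality $1 = \|v^*\|_b \le \|R\|_b + \|v^*-R\|_b$ and Young's inequality with the carefully tuned parameter $\alpha = \lambda_h M_h^2$, a short calculation produces
$$
1 \le (1+\lambda_h M_h^2)\bigl(\|R\|_b^2 + \lambda_h^{-1}\|v^*-R\|_a^2\bigr).
$$
The bracket is bounded by $\lambda/\lambda_h$ thanks to the previous intermediate inequality, so rearrangement gives $\lambda_h/(1+\lambda_h M_h^2)\le\lambda$; the guaranteed version follows by replacing $M_h$ with the computable $\hat M_h$ of Theorem~\ref{t:Mhtheorem}. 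The main obstacle is spotting the Young parameter $\alpha = \lambda_h M_h^2$, which is the unique balance that forces the two resulting terms to share the common factor $1+\lambda_h M_h^2$; an arbitrary split produces only a weaker, two-parameter estimate. Verifying the injectivity of $G_h|_{E_k}$ required for the min-max step is a secondary, standard point and does not contribute to the essential difficulty.
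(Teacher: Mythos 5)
Your argument follows the same route as the paper's proof: discrete Rayleigh--Ritz with the trial space spanned by $G_h$ applied to the first $k$ eigenfunctions, the Helmholtz split $G_h v = \nabla\phi + R$, the $a$-Pythagoras identity from Galerkin orthogonality, Lemma~\ref{l:L2}, and a Young inequality tuned to exactly the same balance (your $\alpha=\lambda_h M_h^2$ corresponds to the paper's $\delta=\lambda_h M_h^2/(1+\lambda_h M_h^2)$). The algebraic packaging differs cosmetically (you expand a triangle inequality; the paper expands $\|R\|_b^2$ directly), but the estimates and the crucial choice of parameter are identical.

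There is, however, one genuine gap: you dispose of the possibility that $\operatorname{span}\{G_h u_1,\dots,G_h u_k\}$ has dimension strictly less than $k$ by assuming ``$h$ small enough'' and calling it a routine consequence of the a priori bound \eqref{e:apriori}. The theorem, by contrast, is stated unconditionally, and it must be, since the whole point is a \emph{guaranteed, computable} lower bound: one cannot verify ``$h$ small enough'' from discrete data, because the continuous eigenfunctions and the regularity constant in \eqref{e:apriori} are unknown. The degenerate case therefore has to be argued directly, as the paper does: if some nontrivial $b$-normalized $v$ in the span satisfies $G_h v=0$, then $R=0$, and Lemma~\ref{l:L2} gives $1=\|v-R\|_b^2\le M_h^2\|v\|_a^2\le M_h^2\lambda_k$, i.e.\ $M_h^{-2}\le\lambda_k$, which again implies $\lambda_h/(1+M_h^2\lambda_h)\le M_h^{-2}\le\lambda_k$. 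Without this case your proof establishes the inequality only for sufficiently fine meshes, which is strictly weaker than the statement. Apart from this omission the proposal is correct.
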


\begin{proof}
We denote an $a$-orthonormal set of first $k$ eigenfunctions
by $u_1,\dots u_k$.
If $k=1$, we denote the first eigenfunction by $v:=u_1$
and note that the discrete Rayleigh--Ritz principle
implies 
$\lambda_h \|G_h v\|_b^2 \leq \| G_h v\|_a^2$.
If $k\geq 2$, we consider in a first case that the space spanned by
$G_h u_1,\dots, G_h u_k$ has dimension $k$.
The discrete Rayleigh--Ritz principle
\cite{WeinsteinStenger1972} for the $k$th discrete
eigenvalue $\lambda_h$ states
$$
  \lambda_{h} =
         \min_{V_h^{(k)}\subseteq V_h} 
         \max_{v_h\in V_h^{(k)}\setminus \{0\}} 
              \frac{\|v_h\|_a^2}{\|v_h\|_b^2} 
     \leq 
     \max_{v_h\in \operatorname{span}\{G_h u_1,\dots, G_h u_k\}\setminus \{0\}} 
              \frac{\|v_h\|_a^2}{\|v_h\|_b^2} 
$$ 
where the minimum runs over all $k$-dimensional subspaces
$V_h^{(k)}$ of $V_h$.
There exist real coefficients $\xi_1,\dots,\xi_k$
with $\sum_{j=1}^k \xi_j^2 =1$ such that the maximizer
on the right-hand side equals $G_h v$ for
$v=\sum_{j=1}^k \xi_j u_j$.
Recall the Helmholtz decomposition
\eqref{e:helmholtz} with the divergence-free part $R$ of $v$.
The above Rayleigh--Ritz principle implies
$$
  \lambda_{h} \leq \frac{\|G_h v\|_a^2}{\|G_h v\|_b^2} 
$$
and therefore (for $k\geq 1$)
\begin{equation}\label{e:EVPbeweis_a}
  \lambda_h \|R\|_b^2 \leq 
\lambda_h \|G_h v\|_b^2 \leq \| G_h v\|_a^2.
\end{equation}
We expand the square on the left-hand side,
use that $\|v\|_b^2=1$, and use the Young inequality
with an arbitrary $0<\delta<1$ to infer
\begin{equation*}
\|R\|_b^2 
=
 \|R-v\|_b^2 + 1 + 2b(R-v,v) 
\geq
 (1-\delta^{-1}) \|R-v\|_b^2 + (1-\delta)
\end{equation*}
(note that $1-\delta^{-1}<0$).
The combination with Lemma~\ref{l:L2} results in
\begin{equation}\label{e:EVPbeweis_b}
 \lambda_h \|R\|_b^2 
 \geq 
 \lambda_h 
 \left(
(1-\delta^{-1}) M_h^2 \|G_h v -v\|_a^2 + (1-\delta)
 \right)  .
\end{equation}
The Galerkin orthogonality \eqref{e:galerkinorthogonality}
in the $a$-product 
and the estimate $\|v\|_a^2\leq\lambda$ for the right-hand side
of \eqref{e:EVPbeweis_a} result in
\begin{equation}\label{e:EVPbeweis_c}
  \|G_h v\|_a^2
\leq 
 \lambda -\| v-G_h v\|_a^2.
\end{equation}
The choice $\delta = (\lambda_h M_h^2)/(1+\lambda_h M_h^2)$
and the combination of
\eqref{e:EVPbeweis_a}--\eqref{e:EVPbeweis_c} results in
\begin{equation}\label{e:proof_boundMh}
   \frac{\lambda_h}{1+M_h^2\lambda_h} \leq \lambda .
\end{equation}
In the remaining case that the space spanned by
$G_h u_1,\dots, G_h u_k$ has dimension strictly less than $k$,
there exists a $b$-normalized function $v$ in the linear hull of the
functions $u_1,\dots,u_k$ such that $G_h v =0$.
The divergence-free part $R$ of $G_h v$ is thus zero
and Lemma~\ref{l:L2} implies
$$
 1=\|v\|_b^2 = \|v-R\|_b^2 \leq M_h^2 \|v-G_h v\|_a^2
  =M_h^2 \|v\|_a^2.
$$
Since $v$ is taken from the linear hull of the first $k$ eigenfunctions,
the orthogonality of the latter implies
$\|v\|_a^2\leq \lambda_k$.
Therefore $M_h^{-2}\leq \lambda_k$ (excluding the trivial case $M_h=0$),
which implies the bound \eqref{e:proof_boundMh} also in the second case.
The stated computable bound follows from \eqref{e:proof_boundMh}
$M_{h} \leq \hat M_h$ 
and the monotonicity of the left hand side
in the proven estimate.
\end{proof}

\section{Bounds on the involved constants}
\label{s:constants}

This section describes how the critical constants
are computed or computationally bounded.

\subsection{Computation of \texorpdfstring{$\kappa_{h}$}{kappah}}
In this paragraph we briefly sketch the numerical
computation of $\kappa_h$. The reasoning is similar
to \cite{LiuOishi}, and we illustrate the 
extension of their approach to the Maxwell operator.
Recall from Subsection~\ref{ss:galerkinerror}
the space $X_h$
and the set $\mathcal S_{f_h}$ for given $f_h\in X_h$.
Let furthermore $\tilde u\in V$ denote the solution
to the linear problem \eqref{e:maxwell_linear}
with right-hand side $f_h$.
Expanding squares
and integration by parts then shows for any
$\tau_h\in \mathcal S_{f_h}$ and any
$v_h\in\mathcal N_{0,D}(\mathcal T)$ the
hypercircle identity \eqref{e:hypercircle}
because $\Curl(\tau_h -\rot\tilde u) =0$.
Thus, the optimization problem
\begin{equation*}
\min_{v_h\in\mathcal N_{0,D}(\mathcal T)}
\min_{\tau_h\in \mathcal S_{f_h}}
\| \tau_h - \rot v_h\|_{L^2(\Omega)}^2
\end{equation*} 
is equivalent to
\begin{equation*}
 \min_{v_h\in\mathcal N_{0,D}(\mathcal T)}
 \| \rot (\tilde u -  v_h)\|_{L^2(\Omega)}^2
 +
\min_{\tau_h\in \mathcal S_{f_h}}
\| \tau_h - \rot \tilde u\|_{L^2(\Omega)}^2 .
\end{equation*}
The minimizer to the first term is given by the solution
to the primal problem \eqref{e:maxwell_linear_discr}
while the minimizer to the second term is given
by $\sigma_h$ as part of the solution pair 
$(\sigma_h,\rho_h)\in \mathcal  S_h\times X_h$
to the following (dual) saddle-point problem
\begin{align*}
(\sigma_h,\tau_h)_{L^2(\Omega)}
+ 
(-1)^d
(\Curl\tau_h,\rho_h)_{L^2(\Omega)}
&=0
&&
\text{for all }\tau_h\in \mathcal S_h
\\
(-1)^d
(\Curl\sigma_h,\phi_h)_{L^2(\Omega)}
&=
-(f_h,\phi_h)_{L^2(\Omega)}
&&
\text{for all }\phi_h\in X_h.
\end{align*}
This can be shown with arguments analogous to
\cite[III§9, Lemma 9.1]{Braess2007}.

Let $T_h:X_h\to\mathcal N_{0,D}(\mathcal T)$
denote the solution
operator to the primal problem
and let $S_h:X_h\to \mathcal S_h$,
$R_h:X_h\to X_h$ denote the components of the solution
operator to the dual problem.
The operators $T_{h}$ and $R_{h}$ are symmetric,
and straightforward calculations involving the definitions
of $S_h$, $T_h$, $R_h$ prove that
the error can be represented as
\begin{equation*}
\|\rot u_h - \sigma_h\|_{L^2(\Omega)}^2
= -(f_{h}, T_{h}f_{h})_{L^2(\Omega)} 
+ (f_{h}, R_{h}f_{h})_{L^2(\Omega)}.
\end{equation*}

The divergence-free constraint in the primal
problem is practically implemented in a
saddle-point fashion.
In what follows, we identity the piecewise
constant function $f_h$ with its
vector representation.
The matrix structure of the discrete problem
is as follows
\begin{equation*}
L
\left(\begin{array}{c}
w_{1} \\
w_{2} 
\end{array}\right)=
\left(\begin{array}{c}
B f_h \\
0 
\end{array}\right)
\qquad
\text{where }
L:=\left(\begin{array}{cc}
D & F^{T}  \\
F & \phantom{{-}}0 
\end{array}\right).
\end{equation*}
The solution
can be expressed by
$$ w_{1} = L^{-1}(*,*)Bf_h= HBf_h,$$
where $H:= L^{-1}(*,*)$ denotes the relevant rows and columns of $L^{-1}$ for the computation of the first component of the solution.

The dual system can be implemented by
introducing Lagrange multipliers related to the 
interior hyper-faces to enforce normal-continuity.
In terms of matrices, the dual system reads
\begin{equation*}
K
\left(\begin{array}{c}
z_{1} \\
z_{2} \\  
z_{3}
\end{array}\right)=
\left(\begin{array}{c}
0 \\
M f_h \\  
0
\end{array}\right)
\qquad
\text{where }
K:=\left(\begin{array}{ccc}
A & -G^{T} & 0 \\
G & \phantom{{-}}0 & C \\  
0 & \phantom{{-}}C^{T} & 0
\end{array}\right).
\end{equation*}
The solution can be expressed by
$$ z_{2} =  K^{-1}(**,**)Mf_h=\tilde{H}Mf_h,$$
where $\tilde{H}:= K^{-1}(**,**)$ denotes the relevant rows and columns of $K^{-1}$ for the computation of the second component of the solution.

Thus,
$$ 
\|\rot u_h -\sigma_h\|_{L^2(\Omega)}^2
 = f_h^{T} B^{T} H B f_h + f_h^{T} M^{T} \tilde{H} M f_h 
 = f_h^{T} Q f_h, 
$$
where $Q := B^{T} H B + M^{T} \tilde{H} M$. 
Then $\kappa_{h}$ 
has the following representation
$$
 \kappa_{h} 
 = \max_{f_{h} \in X_h\setminus\{0\}}
\frac{\|\rot u_h -\sigma_h\|_{L^2(\Omega)}}
     {\|f_h\|_{L^2(\Omega)}}
= 
\max_{y\in\mathbb R^m\setminus\{0\},C^{T}y=0}
\left(\frac{y^{T} Q y}{y^{T} M y}\right)^{-1/2} .
$$
The constraint $C^{T}y=0$ accounts for the fact that
the elements of $X_h$ are the piecewise constant
vector fields satisfying normal continuity across
the element boundaries.
Since
$Q$ is symmetric,  $\kappa_{h}$ is the square 
root of the maximum eigenvalue $\mu$
of the eigenvalue problem
$$ \left(\begin{array}{cc}
Q & C^{T}  \\
C & 0 
\end{array}\right) y = \mu \left(\begin{array}{cc}
M & 0  \\
0 & 0 
\end{array}\right) y .
$$ 
The eigenvalue problem can be 
numerically solved with a power method and an inner iteration. The idea of this method is similar to Lanczos method with an inner iteration \cite{Golub}.

\subsection{Bound on local Poincar\'e constants}\label{ss:poincare}

It is well known that the Poincar\'e constant over a domain
$\omega$ equals $\mu^{-1/2}$ for the first Laplace--Neumann
eigenvalue $\mu$.
We compute upper bounds on the Poincar\'e constant of element patches
by determining lower bounds on the first Neumann eigenvalue.
To this end, we use the method of 
\cite{CarstensenGedicke}
(with improved constants from \cite{CarstensenPuttkammer})
on a sub-triangulation of the patch.
In our computation, for every possible cell patch $\omega_{T}$ a Neumann eigenvalue
problem for the Laplacian is solved.
The lower bound for the first eigenvalue $\lambda_{1}$ is given by $\hat{\lambda}_{1}$
$$ 
\hat{\lambda}_{1}:=\frac{\lambda_{CR,1}}{1+\kappa^{2} \lambda_{CR,1}H^{2}} \leq \lambda_{1} 
$$
where $\kappa^{2} \approx 0.0889$ in two dimensions and 
$\kappa^{2} \approx 1.0083$ in three dimensions,
$H$ is the maximum mesh size of the sub-triangulation of
$\omega_{T}$, and $\lambda_{CR,1}$ is the first discrete eigenvalue computed 
by the Crouzeix--Raviart method. 
We remark that improved values for $\kappa$ were worked out in
\cite{Liu2015}.
To get an adequate lower bound of the Poincar\'e constant
we use a submesh generated by three uniform refinements of the patch.
We divide the resulting Poincar\'e constant 
$\operatorname{diam}(\omega_{T}) \tilde{c}$ by
$\text{diam}(\omega_{T})$ and obtain a mesh-size independent upper bound $\tilde c$,
which in our two-dimensional computations on structured meshes
takes the value $\tilde{c}=0.2461$ for the chosen sequence of
red-refined meshes.

\subsection{Computation of the projection operator constant}
\label{ss:FWcomputation}
The goal of this section is to explicitly determine the constants
$C_{1,\ddiv}$, $C_{2,\ddiv}$, $C_{1,\Curl}$, $C_{2,\Curl}$
from \eqref{e:fwdivstab} and \eqref{e:fwrotstab}
in the two-dimensional case.
We briefly review the construction \cite{FalkWint2} and
the main steps in bounding the involved
constants.
In order to stay close to the notation of
\cite{FalkWint2} and to refer to the construction
in their format,
we consider the complex built
by the operators $\nabla$ and $\rot$.
By the isometry of $\Curl$ and $\nabla$ and of
$\rot$ and $\ddiv$ in two dimensions, the results
can then be used for the 
$\Curl$-$\ddiv$ complex.
More precisely, after rotation of coordinates,
the operator $\pi^\nabla$ replaces $\pi^{\Curl}$
from Section~\ref{ss:FW}
and the operator $\pi^{\rot}$ replaces
$\pi^{\ddiv}$ from Section~\ref{ss:FW}
(with identical stability constants).

Given a triangulation $\mathcal{T}$,
we denote by
$\Delta_{0}(\mathcal{T})$ the set of all
vertices and by $\Delta_1(\mathcal T)$ the set of
all edges of $\mathcal{T}$.
For any $T\in\mathcal T$,
$\Delta_0(T)$ is the set of vertices and
$\Delta_1(T)$ is the set of edges of $T$.

\subsubsection{Falk--Winther operator for the gradient in 2D}

Given any $y \in \Delta_{0}(\mathcal{T})$, the associated
macroelement (or vertex patch) $\omega_y$
is defined as follows
$$
\omega_{y} 
:= \text{int } (\cup \lbrace{T \in \mathcal{T}: y \in T \rbrace}) .
$$
The subset of $\mathcal T$ of triangles having nonempty
intersection with $\omega_y$ is denoted
by $\mathcal T(\omega_y)$.
Analogous notation applies to other open subsets
$\omega\subseteq\Omega$.
Given $u \in H^{1}(\Omega)$,
the discrete function $\pi^\nabla u$ is given by its
expansion
$$
  \pi^\nabla u =
   \sum_{y \in \Delta_{0}(\mathcal{T})} c_y(u) \lambda_{y}
$$ 
where $\lambda_{y}$ is the piecewise linear hat function associated the vertex $y$, i.e $\lambda_{y}(y)=1$ and $\lambda_{y}=0$ on the complement of the macroelement
$\omega_{y}$.
The coefficient $c_y(u)$ is given by
$$
  c_y(u) = \fint_{\omega_y} u\,dx
     +(Q_y^0 u) (y)
$$
where $Q_y^0 u \in S^1(\mathcal T(\omega_y))$ solves
the discrete Neumann problem
$$
\begin{aligned}
 \int_{\omega_{y}} \nabla (Q_{y}^{0}u-u)\cdot \nabla v_h\, dx 
 &= 0
 \quad\text{for all } v_h\in S^1(\mathcal T(\omega_y)) ,
 \\
 \int_{\omega_y} Q_y^0u \,dx 
 &=0 .
 \end{aligned}
 $$
 We denote by $C_1(y,T)$ the constant such that
 $$
  |v_h(y)|^2 
  \leq 
  C_1(y,T) \|\nabla v_h\|_{L^2(\omega_y)}^2
 $$
 holds for all $v_h\in S^1(\mathcal T(\omega_y))$
 with $\int_{\omega_y} v_h\,dx =0$.
 It is readily verified that $C_1(y,T)$ is independent of
 the mesh size. 
For the actual computation of
$C_1(y,T)$ we introduce bilinear forms
bilinear forms
 \begin{align*}
  a(u,v) = u(y) v(y) \quad\text{and}\quad
  b(u,v) = (\nabla u, \nabla v)_{L^{2}(\omega_{y})}
 \end{align*}
 on the space $S^1(\mathcal T(\omega_y))$.
 It is direct to verify that $C_1(y,T)$ equals the largest eigenvalue
 of the generalized discrete eigenvalue problem
 which seeks 
 $(\mu,u) \in \mathbb{R}\times  S^1(\mathcal T(\omega_y))$ such that
  \begin{equation}\label{e:C1yT_problem}
   a(u,v) = \mu b(u,v) \qquad \quad\text{for all } v \in S^1(\mathcal T(\omega_y)).
  \end{equation}

  Given any $T\in\mathcal T$, we then compute
 with triangle and Young inequalities
 $$
  \| \pi^\nabla u\|_{L^2(T)}^2
  \leq
  3
  \sum_{y\in \Delta_0(T)}
   |c_y|^2 \|\lambda_y\|_{L^2(T)}^2.  
 $$
 For any vertex $\Delta_0(T)$
 we use the definition of $c_y$,
 and the definition of
 $C_1(y,T)$ to infer
 \begin{align*}
  |c_y|^2 
  &\leq
  2 
  ( |\fint_{\omega_y} u dx|^2
  +
   | (Q_y^0 u) (y) |^2
   )
  \\
  &\leq
   2\operatorname{meas}(T)^{-1} \| u \|_{L^2(\omega_y)}^2
  +
  2 C_1(y,T)
   \|\nabla Q_y^0 u\|_{L^2(\omega_y)}^2.
 \end{align*}
 The bound
$\| \nabla Q_y^0 u\|_{L^2(\omega_y)}
 \leq
 \| \nabla  u\|_{L^2(\omega_y)}
 $
 is immediate and so concludes the stability analysis.
 The norm of $\lambda_y$ satisfies
 $\|\lambda_y\|_{L^2(T)}^2=\operatorname{meas}(T)/6$.
 The resulting bound reads
\begin{align*}
  \| \pi^\nabla u\|_{L^2(T)}^2
 \leq
  3 \| u \|_{L^2(\omega_T)}^2
  +
  \frac{\operatorname{meas}(T)}{h_T^2}
  \sum_{y\in \Delta_0(T)}
   C_1(y,T) \;
  h_T^2
   \|\nabla  u\|_{L^2(\omega_T)}^2.
\end{align*}
Summarizing, we have
\begin{align*}
  \| \pi^\nabla u\|_{L^2(T)}
 \leq
  C_{1,\Curl} \| u \|_{L^2(\omega_T)}
  +
  C_{2,\Curl} h_{T}
   \|\nabla  u\|_{L^2(\omega_T)}
\end{align*}
where
\begin{align*}
C_{1,\Curl} := \sqrt{3}
\quad\text{and}\quad
C_{2,\Curl} := \sqrt{\frac{\operatorname{meas}(T)}{h_{T}^{2}} 
                       \sum_{y\in \Delta_0(T)} C_1(y,T) }.
\end{align*}

\subsubsection{Falk--Winther operator for the rotation in 2D}

The space $\mathcal{N}_0(\mathcal{T})$ is spanned by the 
edge-oriented basis functions 
$(\psi_{E})_{E \in \Delta_{1}(\mathcal{T})}$ that are uniquely 
defined for any $E \in \Delta_{1}(\mathcal{T})$ through the property
\begin{equation*}
 \int_{E} \psi_{E} \cdot t_{E} \, ds = 1 \quad \mbox{ and } \quad \int_{E'} \psi_{E} \cdot t_{E} \, ds = 0 \quad \mbox{ for all } E' \in \Delta_{1}(\mathcal{T})\setminus\{E\},
\end{equation*}
where $t_{E}$ denotes the unit tangent to the edge $E$ with a 
globally fixed sign. 
For each vertex $y \in \Delta_{0}(\mathcal{T})$,
the piecewise constant function $z_{y}^{0}$ is given by
$$ z_{y}^{0} = \left\{
\begin{array}{ll}
(\text{meas}(\omega_{y}))^{-1} & \mbox{in } \omega_{y} \\
0 & \, \textrm{else}.
\end{array}
\right. $$
The extended edge patch
of an edge
$E=\operatorname{conv}\{y_1,y_2\}$
with $y_1,y_2\in\Delta_0(\mathcal T)$
is given by 
$$
\omega_{E}^{e} := \omega_{y_{1}} \cup w_{y_{2}} .
$$
The piecewise constant function 
$(\delta z^{0})_{E} \in L^{2}(\omega_E^e)$ is given by
$$ 
(\delta z^{0})_{E} := z_{y_{1}}^{0} - z_{y_{2}}^{0}.
$$
The Falk--Winther operator $\pi^{\rot}$ is defined as
\begin{equation*}
\pi^{\rot} u = S^{1} u + \sum_{E \in \Delta_{1}(\mathcal{T})} \int_{E} ((I-S^{1})Q_E^1 u) \cdot t_{E} ds \;\psi_{E},
\end{equation*}
where
\begin{align*} 
S^{1}u &:= M^{1} u + \sum_{y \in \Delta_{0}(\mathcal{T})} (Q_{y,-}^{1} u)(y) \nabla \lambda_{y} \\
\text{and}\quad
M^{1}u &:= \sum_{E \in \Delta_{1}(\mathcal{T})} \int_{\omega_{E}^{e}} u \cdot z_{E}^{1} \, dx \psi_{E} .
\end{align*}
The definition of the involved objects
$Q_{y,-}^1$, $z_E^1$, $Q_E^1$ is as follows.

The operator 
$Q_{y,-}^{1}: H(\rot, \omega_{y}) \to S^1(\mathcal{T}(\omega_{y}))$ is given by the solution of the local discrete Neumann problem
\begin{align*}
(u - \nabla Q_{y,-}^{1} u, \nabla v)_{L^2(\omega_{y})} &= 0 
&\text{for all } v \in S^1(\mathcal{T}(\omega_{y})) \\
\int_{\omega_{y}} Q_{y,-}^{1} u \, dx &= 0 .&
\end{align*}

We denote by $\mathit{RT}_{0,D}(\mathcal{T}(\omega_E^e))$
the space of elements from $\mathit{RT}_0(\mathcal{T}(\omega_E^e))$
with vanishing normal trace on the boundary of $\omega_E^e$.
The weight function $z_{E}^{1}$ is given as the 
solution to the following saddle point problem:
Find 
$(z_{E}^{1},v) \in \mathit{RT}_ {0,D}(\mathcal{T}(\omega_E^e)) 
 \times S^1_0(\mathcal{T}(\omega_{E}^{e}))$ such that
 \begin{equation}\label{e:z1E_problem}
 \begin{aligned}
(\ddiv z_{E}^{1}, \ddiv \tau)_{L^2(\omega_E^e)} 
+ (\tau, \Curl v )_{L^2(\omega_E^e)}
&= (- (\delta z^{0})_{E}, \ddiv \tau)_{L^2(\omega_E^e)}
\\
(z_{E}^{1}, \Curl w)_{L^2(\omega_E^e)} &=
0 
\end{aligned}
\end{equation}
for all
$\tau \in \mathit{RT}_{0,D}(\mathcal{T}(\omega_{E}^{e})) $
and all
$w \in S^1_0(\mathcal{T}(\omega_{E}^{e}))$.

Given an edge $E$ and some $u\in H(\rot,\omega_{E}^e)$, 
the function
$Q_{E}^{1}(u) \in \mathcal{N}_0(\mathcal{T}(\omega_{E}^{e}))$
is defined by the system
\begin{align*}
(u - Q_{E}^{1} u, \nabla \tau)_{L^2(\omega_E^e)}
&= 0 
&& \text{for all } \tau \in \mathcal{S}(\mathcal{T}(\omega_{E}^{e})) \\
(\rot(u - Q_{E}^{1}u), \rot v)_{L^2(\omega_E^e)} &= 0
&& \text{for all } v \in \mathcal{N}_0(\mathcal{T}(\omega_{E}^{e})) .
\end{align*}

We proceed by computing upper bounds to the stability
constant. Let $T\in\mathcal T$.
The triangle inequality implies
\begin{align}
\begin{aligned}
\label{e:pirot_triang}
\|\pi^{\rot} u\|_{L^2(T)}
\leq
&
\|M^{1}u\|_{L^2 (T)} 
+ \|\sum_{y \in \Delta_{0}(T)} 
       (Q_{y,-}^{1} u)(y) \nabla \lambda_{y}\|_{L^2 (T)}
\\
&\qquad
       + \|\sum_{E \in \Delta_{1}(T)} 
  \int_{E} ((I-S^{1})Q_E^1 u) \cdot t_{E} ds
  \psi_{E}\|_{L^2 (T)} .
\end{aligned}
\end{align}
In what follows we refer to the three terms on the 
right hand side as the `first', `second', and `third'
term.

\paragraph{Bound on the first term}
Elementary estimates imply
\begin{align*}
\|M^{1}u\|_{L^2(T)}^2
&\leq 
\| \sum_{E\in \Delta_1(T)} 
 |\int_{\omega_{E}^{e}} u z_{E}^{1} \, dx| 
 |\psi_{E}| \|_{L^2(T)}^{2} \\
\\
&\leq
3 \|u\|_{L^2(\omega_{T})}^2
\sum_{E\in \Delta_1(T)}  
\|z_{E}^{1}\|_{L^2(\omega_{E}^{e})}^{2} 
\|\psi_{E}\|_{L^2(T)}^{2}  .
\end{align*} 
 With the constant
 $$ C_{M_{1}}:= \sqrt{3 \sum_{E\in \Delta_1(T)}  
 \|z_{E}^{1}\|_{L^2(\omega_{E}^{e})}^{2} 
 \|\psi_{E}\|_{L^2(T)}^{2}}
 $$
 we thus have the local bound
 $$
 \|M^{1}u\|_{L^2(T)} \leq  C_{M_{1}} \|u\|_{L^2(\omega_{T})}.
 $$

\paragraph{Bound on the second term}
A scaling argument shows that there is a mesh-size independent
constant $C_{Q,T}$ such that
\begin{equation*}
|Q_{y,-}^{1}u(y) |^{2} 
\|\nabla \lambda_{y}\|_{L^2(T)}^{2}
\leq C_{Q,T}^2 \| \nabla Q_{y,-}^{1} u\|_{L^2(\omega_{y})}^{2}.
\end{equation*}
The value of the constant $C_{Q,T}$ can be computed with a help of
 the following discrete eigenvalue problem. 
Define bilinear forms
 \begin{align*}
  a(u,v) = u(y) v(y) \|\nabla \lambda_{y}\|_{L^2(T)}^{2} 
  \quad\text{and}\quad
  b(u,v) = (\nabla u, \nabla v)_{L^{2}(\omega_{y})}.
 \end{align*}
  Then, $C_{Q,T}^2$ equals the maximal eigenvalue $\mu$ 
  with eigenfunction $u\in S^1(\mathcal T(\omega_y))$ such that
  \begin{equation}\label{e:CQT_problem}
    a(u,v) = \mu b(u,v) \quad \text{for all } v\in S^1(\mathcal T(\omega_y))
  \end{equation}

Furthermore, the stability estimate
$\| \nabla Q_{y,-}^{1} u\|_{L^2(\omega_{y})}
\leq
\|u\|_{L^2(\omega_{T})}^{2}$
is immediate from the system defining $Q_{y,-}^{1}$.
We then have
\begin{align*}
\| \sum_{y\in \Delta_0(T)} (Q_{y,-}^{1} u)(y)
  \nabla \lambda_{y} 
 \|_{L^2(T)}^{2} 
 &\leq 3 \sum_{y \in\Delta_0(T)} |(Q_{y,-}^{1} u)(y)|^{2}
 \|\nabla \lambda_{y}\|_{L^2(T)}^{2} \\
&\leq 3 \sum_{y \in \Delta_0(T)}
 C_{Q,T}^{2} \|u\|_{L^2(\omega_{y})}^{2} 
\leq 9 C_{Q,T}^{2} \|u\|_{L^2(\omega_{T})}^{2} .
\end{align*}

\paragraph{Bound on the third term}
We note that there is a constant $C_S$ such that
\begin{equation*}
|\int_{E} (I-S^{1}) Q_E^1 u \cdot t_{E} \, ds |^{2} 
 \|\psi_{E}\|_{L^2(T)}^{2} 
 \leq C_{S} \|Q_E^1  u\|_{L^2(\omega_{E}^{e})}^{2} .
\end{equation*}
The constant $C_{S}$ can be computed as the largest eigenvalue $\mu$
of the auxiliary eigenvalue
\begin{equation}\label{e:CS_problem}
\int_{E} (I-S^{1}) u_h \cdot t_{E} \, ds
 \cdot \int_{E} (I-S^{1}) v_h \cdot t_{E} \, ds
 \|\psi_{E}\|_{L^2(T)}^2
 =
 \mu
 (u_h,v_h)_{L^2(\omega_{E}^{e})}
\end{equation}
on the space $\mathcal{N}_0(\mathcal{T}(\omega_{E}^{e}))$.
In order to bound the norm of $Q_E^1  u$, we use the discrete
Helmholtz decomposition
$$
 Q_E^1  u  = \nabla \alpha_h + R,
$$
where $\alpha_h\in S^1(\mathcal T(\omega_E^{e}))$
and $R$ is a discretely divergence-free N\'ed\'elec function
with homogeneous tangential boundary conditions.
From a discrete Maxwell eigenvalue problem it follows that
\begin{equation}\label{e:local_discr_maxwell}
 \|R\|_{L^2(T)}
 \leq 
 \|R\|_{L^2(\omega_E^e)}
  \leq c_{M} h_T \| \rot R \|_{L^2(\omega_E^e)}.
\end{equation}
Moreover, from the definition of $Q_{E}^{1}$, we 
deduce the stability
$
\|\rot Q_{E}^{1} u\|_{L^2(\omega_{E}^{e})}
\leq 
\|\rot u\|_{L^2(\omega_{E}^{e})}.
$
From the definition of $Q_{E}^{1}$, we infer from testing with 
$\tau = \alpha_h$ 
$$ 
 \|\nabla \alpha_h\|_{L^2(\omega_E^e)}^{2} 
 = (u, \nabla \alpha_h)_{L^2(\omega_E^e)}
 \leq \|u\|_{L^2(\omega_E^e)}  \|\nabla \alpha_h \|_{L^2(\omega_E^e)}.
$$
This yields
\begin{equation*}
\|\nabla \alpha\|_{L^2(\omega_E^e)} \leq \|u\|_{L^2(\omega_E^e)}.
\end{equation*}
The orthogonality of the decomposition therefore 
shows
\begin{align*}
\|Q_{E}^{1} u \|_{L^2(\omega_E^e)}^{2} 
=\| R \|_{L^2(\omega_E^e)}^{2} 
      + \|\nabla \alpha_h\|_{L^2(\omega_E^e)}^{2} 
\leq 
 c_{M}^2 h_T^2 \|\rot u\|_{L^2(\omega_{E}^{e})}^2
+ \|u\|_{L^2(\omega_{E}^{e})}^{2}.
\end{align*}
Thus, we obtain 
\begin{align*}
&
\|\sum_{E \in \Delta_{1}(T)}
  \int_{E} ((I-S^{1})Q_E^1 u) \cdot t_{E} ds \psi_{E}\|_{L^2(T)}^{2}
\\
&\qquad\leq  
3 \sum_{E \in \Delta_1(T)}
|\int_{E} ((I-S^{1})Q_E^1 u) \cdot t_{E} ds|^2 
  \|\psi_{E}\|_{L^2(T)}^{2} \\
&\qquad\leq 
9  C_{S} \left((c_{M} h_T)^{2} 
   \|\rot u\|_{L^2(\omega_T)}^{2} 
    + \|u\|_{L^2(\omega_T)}^{2}\right) .
\end{align*}
 We collect the bounds for the individual terms in
 \eqref{e:pirot_triang} and conclude
 \begin{align*}
\|\pi^{\rot} u\|_{L^2(T)}
&\leq
C_{M_{1}} \|u\|_{L^2(\omega_{T})} + 3 C_{Q,T} \|u\|_{L^2(\omega_{T})} 
\\ 
&\qquad
+ 3 \sqrt{C_{S}} \left((c_{M} h_T)
   \|\rot u\|_{L^2(\omega_T)}
    + \|u\|_{L^2(\omega_T)}\right) \\
&\leq C_{1,\text{div}}  \|u\|_{L^2(\omega_{T})} + C_{2,\text{div}} h \|\rot u\|_{L^2(\omega_T)},
\end{align*}
where 
\begin{align*}
C_{1,\text{div}} := C_{M_{1}} + 3 C_{Q,T} + 3 \sqrt{C_{S}}
\quad\text{and}\quad
C_{2,\text{div}} := 3 \sqrt{C_{S}}  c_{M}  .
\end{align*}

\begin{remark}
 The discrete problems to be solved for the computation of the
 interpolation constants are the local discrete linear problem
 \eqref{e:z1E_problem}
 and the local discrete eigenvalue problems
 \eqref{e:C1yT_problem}, 
 \eqref{e:CQT_problem},
 \eqref{e:CS_problem}
 as well as the local discrete Neumann problem described in
 Subsection~\ref{ss:poincare} and the local discrete Maxwell eigenvalue problem 
 related to \eqref{e:local_discr_maxwell}.
\end{remark}

\subsection{Computation of the regular decomposition constant}
In two dimensions the value for 
the constant $C_{RD}$ from the regular
decomposition equals 1.
In three dimensions, assuming the 
domain $\Omega$ is star-shaped
with respect to a ball $B$,
estimates on the constant can be derived by
tracking the constants from \cite{Guzman}.
Sharper bounds can be expected from the solution
of corresponding eigenvalue problems
(similar to \cite{GallistlLBB1,GallistlLBB2}),
but guaranteed inclusions require some knowledge
on the distribution of the spectrum.
By tracking the constants of \cite{Guzman} and explicit
calculations we obtained
the bound $C_{RD}\leq 2947$ for the unit cube
in three dimensions.

\section{Numerical results}\label{s:num}

In this section, we present numerical results for the
two-dimensional case
(and later a simple test in three dimensions).
The Python implementation uses 
the FEniCS library \cite{fenics}
and
the realization
of the Falk--Winther operator from
\cite{HenningPerssonCode}.
The two planar domains we consider are the 
unit square $\Omega=(0,1)^2$
and the L-shaped domain
$\Omega= (-1,1)^2 \backslash ([0, 1] \times [-1,0])$.
The initial triangulations are displayed in
Figure~\ref{f:initialmeshes}.
We consider uniform mesh refinement (red-refinement).
The computational bound $\hat M_h$ for $M_{h}$ (see Subsection~\ref{ss:Mhvalues}
for actual values) is achieved with the techniques
described in Section~\ref{s:constants}.

\begin{figure}
 \begin{center}
  \begin{tikzpicture}[scale=1.5] 
   \draw (0,0)--(1,0)--(1,1)--(0,0)--(0,1)--(1,1);
  \end{tikzpicture}
  \hfil
  \begin{tikzpicture} 
   \draw (0,0)--(1,0)--(1,1)--(0,0)--(0,1)--(1,1);
   \draw (0,1)--(-1,1)--(-1,0)--(0,1);
   \draw (0,0)--(-1,0)--(-1,-1)--(0,-1)--(0,0)--(-1,-1);
  \end{tikzpicture}
 \end{center}

 \caption{Initial triangulations.\label{f:initialmeshes}}
\end{figure}

\subsection{Values of the individual constants}\label{ss:Mhvalues}

We begin by reporting our upper bounds to the individual constant
entering the bound on the Falk--Winther constants.
The left column of Table~\ref{tab:constants} displays
the values of the constants 
entering the computation of the upper bound
for the unit square and the 
L-shaped domain on structured grids
with $h<\sqrt{2}/4$.
% 
% \begin{equation*}
% M_{h}
% \leq
%  \sqrt{C_{OL}} (
%   h_{\max} ((1+C_{1,\Curl}) \tilde c + C_{2,\Curl})
%    + \kappa_{h} C_{1,\ddiv} )
%  \leq \hat M_h
% \end{equation*}
% from Theorem~\ref{t:Mhtheorem}.
All these constants coincide for the two domains,
which have a similar local mesh geometry.

\begin{table}
\begin{center}
  \begin{tabular}{ l | c  }
     \text{constant} & \text{upper bound} \\ 
     \hline
           $C_{M_{1}}$ & 0.94974  \\
      $C_{Q,T}$ & 0.66666 \\
      $C_{S}$ & 2.25975 \\
      $C_{M}$ & 0.06522 \\
      $C_{1}(y,T)$ & 1.05409 \\
     $\tilde c$ & 0.2461 \\
     $C_{OL}$ & 13 \\
     $C_{RD}$ & 1 \\
     $C_{1, \Curl}$ & 1.7321  \\
     $C_{2, \Curl}$ & 0.9129  \\
     $C_{1, \ddiv}$ & 9.7290\\
  \end{tabular}
  \qquad
     \begin{tabular}{ l | c   }
 %     \hline
      constant & upper bound  \\
      \hline
      $C_{M_{1}}$ & 1.31692  \\
      $C_{Q,T}$ & 0.66666 \\
      $C_{S}$ & 2.99264 \\
      $C_{M}$ & 0.06534 \\
      $C_{1}(y,T)$ & 1.05409 \\
      $\tilde c$ & 0.2461 \\
      $C_{OL}$ & 13 \\
      $C_{RD}$ & 1 \\
      $C_{1, \Curl}$ & 1.7321  \\
      $C_{2, \Curl}$ & 0.7394  \\
      $C_{1, \ddiv}$ & 12.29484\\ 
   \end{tabular}
\end{center}
\caption{Upper bounds of some of the relevant constants
         for the square and the L-shaped
         domain (left).
         The same quantities for an unstructured mesh (right).
         \label{tab:constants}}
\end{table}

We compare the values with results for the unstructured mesh
displayed in Figure~\ref{f:unstructured}.
The constants for this mesh geometry are displayed in 
the second column of
Table~\ref{tab:constants}. We note that there is no
significant deviation from the structured case.

 \begin{figure}
  \begin{center}
  \includegraphics[height=0.4\textwidth]{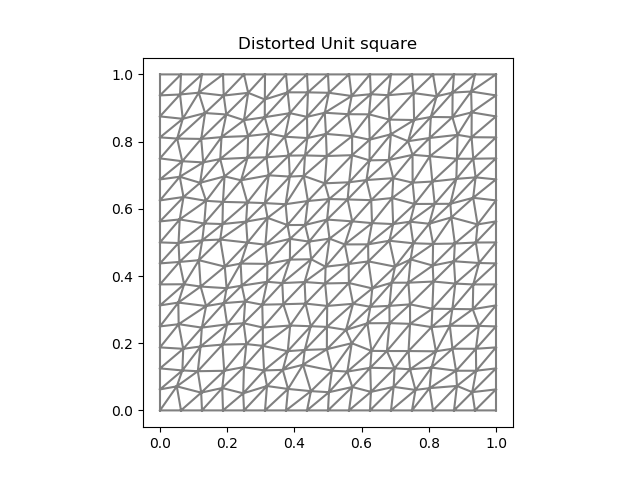} 
  \end{center} 
  \caption{An unstructured mesh
           on the square domain.\label{f:unstructured}}
 \end{figure}

\subsection{Results on the square domain}
For the square domain, it is known that the first
two eigenvalues 
are given by
$\lambda_1=\pi^{2}\approx9.8696$
and $\lambda_2=2\pi^2\approx19.7392$.
The numerical results are displayed in 
Table~\ref{tab:square_results}.
The displayed lower bounds are guaranteed, but their 
values become practically relevant after 
a moderate number of refinements only
when $M_h$ is sufficiently small.

\begin{table}[bt]
\begin{center}
  \begin{tabular}{ l | c | c || c | c || c | c  }
%     \hline
     $\frac{h}{\sqrt{2}}$ & $\kappa_{h}$ & $\hat M_{h}$ & $\lambda_{h}^{(1)}$ & lower bd. & $\lambda_{h}^{(2)}$ & lower bd. \\
     \hline
     $2^{-1}$ & 0.1443 & 9.1034 & 9.6000 & 0.0121 & 20.2871 & 0.0121 \\
     $2^{-2}$ & 0.0721 & 4.5499 & 9.8305 & 0.0481 & 20.0235 & 0.0482 \\
     $2^{-3}$ & 0.0356 & 2.2592 & 9.8612 & 0.1921 & 19.8205 & 0.1940 \\
     $2^{-4}$ & 0.0180 & 1.1359 & 9.8676 & 0.7186 & 19.7601 & 0.7458 \\
     $2^{-5}$ & 0.0089 & 0.5648 & 9.8691 & 2.3791 & 19.7445 & 2.7053 \\
     $2^{-6}$ & 0.0044 & 0.2806 & 9.8695 & 5.5530 & 19.7405 & 7.7269 \\
     $2^{-7}$ & 0.0023 & 0.1421 & 9.8696 & 8.2300 & 19.7395 & 14.1152 \\
     $2^{-8}$ & 0.0011 & 0.0712 & 9.8696 & 9.3992 & 19.7393 & 17.9431 \\
     $2^{-9}$ & 0.0006 & 0.0354 & 9.8696 & 9.7488 & 19.7392 & 19.2619 \\
  \end{tabular}
\end{center}
\caption{%
       Numerical results on the square domain.
       \label{tab:square_results}}
\end{table}

\begin{table}[bt]
 \begin{center}
   \begin{tabular}{ l | c | c || c | c || c | c  }
 %     \hline
      $\frac{h}{\sqrt{2}}$ & $\lambda_{h}^{(3)}$ & lower bd. & $\lambda_{h}^{(4)}$ & lower bd. & $\lambda_{h}^{(5)}$ & lower bd. \\
      \hline
      $2^{-1}$ & 48 &  0.0121 & 57.6 & 0.0121 & 75.7128 & 0.0121 \\
      $2^{-2}$ & 36.8522 & 0.0482 & 46.7216 &  0.0483 & 78.5482 & 0.0483 \\
      $2^{-3}$ & 38.8122 & 0.1949 & 48.6686 & 0.1951 & 79.9595 & 0.1954 \\
      $2^{-4}$ & 39.3105 & 0.7600 & 49.1763 & 0.7630 & 79.2745 & 0.7675 \\
      $2^{-5}$ & 39.4362 & 2.9040 & 49.3049 & 2.9475 & 79.0401 & 3.0153 \\
      $2^{-6}$ & 39.4679 & 9.6063 & 49.3372 & 10.0980 & 78.9779 & 10.9382 \\
      $2^{-7}$ & 39.4758 & 21.9696 & 49.3453 & 24.7683 & 78.9621 & 30.4415 \\
      $2^{-8}$ & 39.4778 & 32.8925 & 49.3473 & 39.4697 & 78.9581 & 56.3816 \\
      $2^{-9}$ & 39.4782 & 37.6140 & 49.3482 & 46.4694 & 78.9572 & 71.8367 \\
   \end{tabular}
 \end{center}
 \caption{%
        Numerical results on the square domain for higher eigenvalues.
        \label{tab:square_results1}}
\end{table}

\subsection{Results on the L-shaped domain}

On the L-shaped domain, we use the 
reference value
$\lambda=1.4756218241$
from \cite{CostabelDaugeMartinVial}
for the first eigenvalue
for comparison.
The numerical results are displayed in 
Table~\ref{tab:Lshape_results} for eigenvalues $\lambda_1$ and $\lambda_2$
and in Table~\ref{tab:Lshape_results_higher} for eigenvalues
$\lambda_3$ and $\lambda_4$.
As in the previous example, the lower bounds take
values of practical significance after a couple
of refinement steps.
\begin{table}[bt]
\begin{center}
  \begin{tabular}{ l | c | c || c | c || c | c }
     $\frac{h}{\sqrt{2}}$ & $\kappa_{h}$ & $\hat M_{h}$ & $\lambda_{h}^{(1)}$ & \text{lower bd.} & $\lambda_{h}^{(2)}$ & \text{lower bd.} \\ 
     \hline

      $2^{-1}$ & 0.1355 & 8.7947 & 1.3180 & 0.0128 & 3.5356 & 0.0129\\
      $2^{-2}$ & 0.0709 & 4.5078 & 1.4157 & 0.0476 & 3.5309 & 0.0485 \\
      $2^{-3}$ & 0.0356 & 2.2592 & 1.4526 & 0.1726 & 3.5327 & 0.1856 \\
      $2^{-4}$ & 0.0180 & 1.1366 & 1.4667 & 0.5067 & 3.5336 & 0.6350 \\
      $2^{-5}$ & 0.0090 & 0.5682 & 1.4721 & 0.9979 & 3.5339 & 1.6506 \\
      $2^{-6}$ & 0.0045 & 0.2835 & 1.4743 & 1.3182 & 3.5340 & 2.7525 \\
      $2^{-7}$ & 0.0022 & 0.1421 & 1.4751 & 1.4324 & 3.5340 & 3.2987 \\
      $2^{-8}$ & 0.0011 & 0.0712 & 1.4754 & 1.4643 & 3.5340 & 3.4718 \\ 
  \end{tabular}
\end{center}
\caption{Numerical results on the L-shaped domain.
       \label{tab:Lshape_results}}
\end{table}

 \begin{table}[bt]
 \begin{center}
   \begin{tabular}{ l | c | c || c | c }
      $\frac{h}{\sqrt{2}}$ & $\lambda_{h}^{(3)}$ & \text{lower bd.} & $\lambda_{h}^{(4)}$ & \text{lower bd.} \\ 
      \hline
      $2^{-1}$ & 9.1672 & 0.0129 & 11.4797 & 0.0129 \\
      $2^{-2}$ & 9.6992 & 0.0490 & 11.3247 & 0.0490 \\
      $2^{-3}$ & 9.8272 & 0.1921 & 11.3736 & 0.1926 \\
      $2^{-4}$ & 9.8590 & 0.7177 & 11.3855 & 0.7248 \\
      $2^{-5}$ & 9.8670 & 2.3574 & 11.3885 & 2.4351 \\
      $2^{-6}$ & 9.8689 & 5.5044 & 11.3892 & 5.9472 \\
      $2^{-7}$ & 9.8694 & 8.2299 & 11.3894 & 9.2604 \\
      $2^{-8}$ & 9.8696 & 9.3992 & 11.3895 & 10.7676 \\
   \end{tabular}
 \end{center}
 \caption{Numerical results on the L-shaped domain for higher eigenvalues.
        \label{tab:Lshape_results_higher}}
 \end{table}

 \subsection{Results on the cube domain}
  We finally present a numerical test in three space dimensions
  on the cube $\Omega=(0,1)^3$.
  For our three-dimensional results we derive upper bounds on the 
  Falk--Winther operator in a fashion analogous to the computations
  of Section~\ref{s:constants}.
  We do not give a detailed account of these calculations because
  the general reasoning with the use of trace and inverse
  inequalities and the solution of local discrete eigenvalue problems is
  not different from the two-dimensional case.
  For the cube domain, it is known that the first
 eigenvalue  is given by
 $\lambda_1=2 \pi^{2}\approx19.7392$.
 The numerical results are displayed in 
 Table~\ref{tab:cube_results}.
 This table shows that our constants are too large in order to get practically relevant bounds in three dimensions.
 The results should therefore rather be seen as a proof of concept.
 Practically relevant bounds would require advanced computational techniques
 beyond our FEniCS implementation 
 to achieve a finer spatial resolution
 or alternative interpolation operators with
 sharper stability bounds.
 
 \begin{table}[bt]
 \begin{center}
   \begin{tabular}{ l | c | c || c | c  }
 %     \hline
      $\frac{h}{\sqrt{3}}$ & $\kappa_{h}$ & $\hat M_{h}$ & $\lambda_{h}^{(1)}$ & lower bd. \\
      \hline
      $2^{-1}$ & 0.1226 & 168401.5 & 20.7306 & 3.53e-11 \\
      $2^{-2}$ & 0.0992 & 84219.0 & 20.0256 & 1.41e-10 \\
      $2^{-3}$ & 0.0513 & 42110.3 & 19.8221 & 5.64e-10 \\
      $2^{-4}$ & 0.0267 & 21055.7 & 19.7600 & 2.26e-9  \\
    
   \end{tabular}
 \end{center}
 \caption{%
        Numerical results on the cube domain.
        \label{tab:cube_results}}
 \end{table}
 
 \begin{table}
 \begin{center}
   \begin{tabular}{ l | c  }
      \text{constant} & \text{upper bound} \\ 
      \hline
      $\tilde c$ & 0.2674 \\
      $C_{OL}$ & 71 \\
      $C_{RD}$ & 2947 \\
      $C_{1, \Curl}$ & 19.7003  \\
      $C_{2, \Curl}$ & 4.5713  \\
      $C_{1, \ddiv}$ & 57.1595 \\
   \end{tabular}
 \end{center}
 \caption{Upper bounds of some of the relevant constants
          for the cube domain.
          \label{tab:constants3d}}
 \end{table}

\section{Conclusive remarks}\label{s:conclusion}

The theory on computational lower bounds to the 
Maxwell eigenvalues applies to the case of two
or three space dimensions. The sharpness of the
bounds critically depends on the actual value
of $\hat M_h$ on coarse meshes.
We remark that in two dimensions the eigenvalues
coincide with the Laplace--Neumann eigenvalues,
so our computations should be rather seen as
a proof of concept. We succeeded in bounding
$M_h$ by $\hat M_h$ such that meaningful lower bounds could
be achieved on moderately fine meshes.
The novel ingredient is the explicit computational
stability bound on the Falk--Winther operator
that makes a full quantification of the 
Galerkin error possible.
Furthermore, the methodology does not immediately
generalize to adaptive meshes in the sense that 
$\kappa_h$ is expected to scale like 
(some power of) the maximum mesh size.
The practical use of the bounds provided by the method in three
dimensions is very limited. Tighter interpolation bounds,
a refined estimate of the regular decomposition constant
$C_{RD}$, 
and the
combination with iterative solvers will be the subject of future
research.

\section*{Acknowledgments}
The first author is supported by the 
European Research Council
(ERC Starting Grant \emph{DAFNE}, agreement ID 891734).

\bibliographystyle{siamplain}
\bibliography{literatur}
\end{document}